\documentclass[10pt]{article}
\usepackage{amsfonts}
\usepackage{amsfonts,latexsym,amsmath,amscd,geometry}
\geometry{margin=1in} \setlength\headheight{0.1cm}
\usepackage{amssymb}
\usepackage{latexsym}
\usepackage{cases}

\topmargin       -0.20in \oddsidemargin 0.1in \evensidemargin 0.1in
\marginparwidth   0.00in \marginparsep 0.00in \textwidth 16cm
\textheight 23.5cm

\usepackage{amsmath,amssymb,amsthm,amsfonts}
\usepackage{mathrsfs}
\usepackage{bbm}
\usepackage{color}

\linespread{1.0}
\usepackage{mathrsfs}
\usepackage{graphicx,psfrag,graphics}

\pagestyle{myheadings}

\newtheorem{lemma}{Lemma}[section]
\newtheorem{theorem}{Theorem}[section]

\newtheorem{proposition}{Proposition}[section]

\numberwithin{equation}{section} \arraycolsep=1.5pt

\newcommand{\R}{\mathbb{R}}

\newcommand{\al}{\alpha}

\newcommand{\pa}{\partial}

\newcommand{\eps}{\epsilon}

\begin{document}

\title{{\bf {Stability of the superposition of boundary layer and rarefaction wave
 for outflow problem on the two-fluid Navier-Stokes-Poisson system }}}

\author{
\begin{tabular}{cc}
& H{\sc aiyan}  Y{\sc in$^{1}$},\ \ \ J{\sc inshun} Z{\sc
hang}$^{2}$\ \ \ {\&} \
C{\sc hangjiang} Z{\sc hu$^{3}$}\\
&
  {\small\it 1.
      School of Mathematical Sciences,}\\
&
  {\small\it
      Huaqiao
University, Quanzhou 362021, P.R. China}\\
&
   {\small\it 2.
      School of
Mathematical Sciences,} \\
& {\small\it
         Huaqiao University, Quanzhou 362021, P.R.
China}\\
 &
  {\small\it 3.
       School of Mathematics,}\\
&  {\small\it
       South China
University of Technology, Guangzhou 510641, P.R. China}\\
&
 {\small\it
        E-mail: yinhaiyan2000@aliyun.com,  jszhang@hqu.edu.cn,   cjzhu@mail.ccnu.edu.cn}\\
\end{tabular}
}

\date{}

\maketitle

\begin{abstract}
This paper is concerned with the study of nonlinear stability of
  superposition of boundary layer and rarefaction wave on the two-fluid
 Navier-Stokes-Poisson system in the half line
$\R_{+}=:(0,+\infty)$. On account of the quasineutral assumption and
the absence of the electric field for the large time behavior, we
successfully construct the boundary layer and rarefaction wave, and
then we give the rigorous proofs of the stability theorems on the
superposition of boundary layer and rarefaction wave under small
perturbations for the corresponding initial boundary value problem
of the Navier-Stokes-Poisson system, only provided the strength of
boundary layer is small while the strength of  rarefaction wave can
be arbitrarily large. The complexity of nonlinear composite wave
leads to many complicated terms in the course of establishing the
{\it a priori} estimates. The proofs are given by an elementary
$L^2$ energy method.
\end{abstract}

\medskip

{\bf Key words.}   Navier-Stokes-Poisson; boundary layer and
rarefaction wave; stability.

\medskip

{\bf AMS subject classifications.} 35Q35, 35B40, 35B45.

\tableofcontents \vspace{4mm}

\section{Introduction}
\subsection{The problem}
The dynamics of the charged particles in the collisional dusty
plasma can be described by the Navier-Stokes-Poisson (called NSP in
the sequel for simplicity) system which reads in the Eulerian
coordinates
\begin{eqnarray}\label{1.1}
&&\left\{\begin{aligned}
& \pa_{t}\rho_{i}+\pa_{x}(\rho_{i} u_{i})=0,\\
&\rho_{i}(\pa_{t}u_{i}+u_{i}\pa_{x}u_{i})+\pa_{x}P(\rho_{i})
=\rho_{i}E+\mu_{i} \pa_{x}^{2}u_{i},\\
& \pa_{t}\rho_{e}+\pa_{x}(\rho_{e} u_{e})=0,\\
&\rho_{e}(\pa_{t}u_{e}+u_{e}\pa_{x}u_{e})+\pa_{x}P(\rho_{e})
=-\rho_{e}E+\mu_{e} \pa_{x}^{2}u_{e},\\
&\pa_{x}E=\rho_{i}-\rho_{e}.
\end{aligned}\right.
\end{eqnarray}
 Here, for $\alpha=i,e$, $P(\rho_{\alpha})$ is
pressure which is given by
\begin{eqnarray}\label{1.3}
P(\rho_{\alpha})=A\rho_{\alpha}^{\gamma_{\alpha}},
\end{eqnarray}
where $A$ is a positive constant and $\gamma_{\alpha}> 1$ is the
adiabatic exponent. Thus each fluid (ions or electrons) is regarded
as an ideal polytropic gas. The unknown functions $\rho_{\alpha}$
and $u_{\alpha}$
 stand for the density and velocity of ions $(\alpha=i)$ and  electrons $(\alpha=e)$ in
plasma, respectively, and $E$ is the electric field, while the
positive constants $\mu_{\alpha}>0$ denote the viscosity coefficient
 of ions $(\alpha=i)$ and electrons
$(\alpha=e)$, respectively. Throughout the paper, for brevity we
assume $\gamma_{i}=\gamma_{e}=\gamma> 1$;
  the case of $\gamma_{i}\neq\gamma_{e}$ and $\gamma_{i}=\gamma_{e}=1$
could be considered in a similar way. We also assume
$\mu_{i}=\mu_{e}=1$ throughout the paper. One can see \cite{{F.
Chen}} and \cite{{PM-1990}} for more information about the physical
background of model $\eqref{1.1}$.

 We consider \eqref{1.1} in the half line $\mathbb{R}_{+}$ with initial data
\begin{eqnarray}\label{1.a}
[\rho_{i},u_{i},\rho_{e},u_{e}](x,0)=[\rho_{i0},u_{i0},\rho_{e0},u_{e0}](x)
\rightarrow [\rho_{+},u_{+},\rho_{+},u_{+}]\ \ \ \mathrm{as} \ \  x
\rightarrow +\infty,
\end{eqnarray}
where $\rho_{+}>0$ and $u_{+}$ are constants. The boundary
conditions are
\begin{eqnarray}\label{1.b}
u_{i}(0,t)=u_{e}(0,t)=u_{b}<0,  \quad \forall\, t\geq 0,
\end{eqnarray}
 and
the compatibility condition $u_{b}=u_{i0}(0)=u_{e0}(0)$ holds.

In the case of $u_{b}<0$, electrons and ions fluids flow away from
the boundary $\{x=0\}$, and thus the problem \eqref{1.1},
\eqref{1.a} and \eqref{1.b} in such case is called an outflow
problem. The case of $u_{b}=0$ and $u_{b}>0$ is called the
impermeable wall problem and the inflow problem, respectively.
Notice that for the inflow problem, there should been an additional
boundary condition on the density. In the paper, we focus on the
outflow problem in the case of $u_{b}<0.$ Here we remark that the
impermeable wall problem and the inflow problem of the
 Navier-Stokes-Poisson system are left for study
in the future.

\subsection{Some preliminary}
In order to study the large time behavior of solutions to the
initial boundary value problem \eqref{1.1}, \eqref{1.a} and
\eqref{1.b}, we notice that in the simplified case of the electric
field $E=0$  and the quasineutral assumptions $\rho_{i}=\rho_{e}$
and $u_{i}=u_{e}$ for the large time behavior, the problem is
reduced to consider the following single quasineutral Navier-Stokes
equation
\begin{eqnarray}\label{1.3}
&&\left\{\begin{aligned}
& \pa_{t}\rho+\pa_{x}(\rho u)=0,\\
&\rho(\pa_{t}u+u\pa_{x}u)+\pa_{x}P(\rho) =\pa_{x}^{2}u
\end{aligned}\right.
\end{eqnarray}
with initial data
\begin{equation}
\label{dd-id} [\rho,u](x,0)=[\rho_{0},u_{0}](x)\rightarrow
[\rho_{+},u_{+}],\quad as \quad x\rightarrow+\infty
\end{equation}
and the boundary condition
\begin{equation}
\label{ddw1-bd} u(0,t)=u_{b}<0, \quad \forall\, t\geq 0.
\end{equation}

 Matsumura \cite{MNinflow} gave the classification of the large
time behavior solutions to the outflow problem for Navier-Stokes
equation \eqref{1.3} in terms of $(\rho_{+},u_{+})$ and $u_{b}<0.$
In what follows, let us recall some basic facts concerning the study
of the outflow problem. The characteristic speeds of the hyperbolic
part of \eqref{1.3} are
\begin{eqnarray}\label{1.5}
\lambda_{1}=u-C(\rho),\ \ \ \lambda_{2}=u+C(\rho),
\end{eqnarray}
where $C(\rho)=\sqrt{P'(\rho)}=\sqrt{\gamma
A}\rho^{\frac{\gamma-1}{2}}$ is the local sound speed. From now on,
we define
$$v=\frac{1}{\rho},\ \  v_{+}=\frac{1}{\rho_{+}},\cdots, and \  so \  on,$$
 where $v$ is the specific volume.
 Let
$$C_{+}=C(\rho_{+})=\sqrt{\gamma A}\rho_{+}^{\frac{\gamma-1}{2}}=\sqrt{\gamma A}v_{+}^{-\frac{\gamma-1}{2}},\ \ \
M_{+}=\frac{|u_{+}|}{C_{+}}$$ be the sound speed and the Mach number
at the far field $x=+\infty$, respectively. The phase plane
$\mathbb{R}_{+}\times \mathbb{R}$ of $(v,u)$ can be divided into
three subsets:
$$\Omega_{sub}:=\left\{(v,u)\in \mathbb{R}_{+}\times
\mathbb{R};\ \ \ |u|<C\left(\frac{1}{v}\right)\right\},$$
$$\Gamma_{trans}:=\left\{(v,u)\in \mathbb{R}_{+}\times
\mathbb{R};\ \ \ |u|=C\left(\frac{1}{v}\right)\right\},$$
$$\Omega_{super}:=\left\{(v,u)\in \mathbb{R}_{+}\times
\mathbb{R}; \ \ \ |u|>C\left(\frac{1}{v}\right)\right\},$$ where
$\Omega_{sub},$ $\Gamma_{trans}$ and $\Omega_{super}$ are called the
subsonic, transonic and supersonic regions, respectively. In the
phase plane, we denote the curves through a right state point
$(v_{1},u_{1})$:
$$BL(v_{1},u_{1})=\left\{(v,u)\in \mathbb{R}_{+}\times
\mathbb{R}; \ \ \ \frac{u}{v}=\frac{u_{1}}{v_{1}}\right\},$$
$$R_{2}(v_{1},u_{1})=\left\{(v,u)\in \mathbb{R}_{+}\times
\mathbb{R};\ \ \ u=u_{1}-\sqrt{\gamma
A}\int_{v_{1}}^{v}s^{-\frac{\gamma+1}{2}}ds,\ \ \ v>v_{1}\right\},$$
$$S_{2}(v_{1},u_{1})=\left\{(v,u)\in \mathbb{R}_{+}\times
\mathbb{R};\ \ \
u=u_{1}+\sqrt{\left[P\left(\frac{1}{v}\right)-P\left(\frac{1}{v_{1}}\right)\right](v_{1}-v)},\
\ \ v<v_{1}\right\},$$ to be the boundary line, 2-rarefaction wave
and 2-shock wave curves, respectively. Then the large time behavior
of solutions to the outflow problem \eqref{1.3}, \eqref{dd-id} and
\eqref{ddw1-bd} can be classified into the following four cases (the
cases are omitted which concern  shock waves):
\begin{figure}[h!]
\begin{center}
\psfrag{1}{$u_{b}$}
  \psfrag{2}{$u_{b}$}
  \psfrag{3}{$O$}
  \psfrag{4}{$(v_{*},u_{*})$}
  \psfrag{5}{$(v_{+},u_{+})$}
  \psfrag{6}{$BL(v_{+},u_{+})$}
  \psfrag{7}{$R_{2}(v_{+},u_{+})$}
  \psfrag{8}{$S_{2}(v_{+},u_{+})$}
  \psfrag{9}{$v$}
  \psfrag{0}{$u$}
\includegraphics[width=0.3\textwidth]{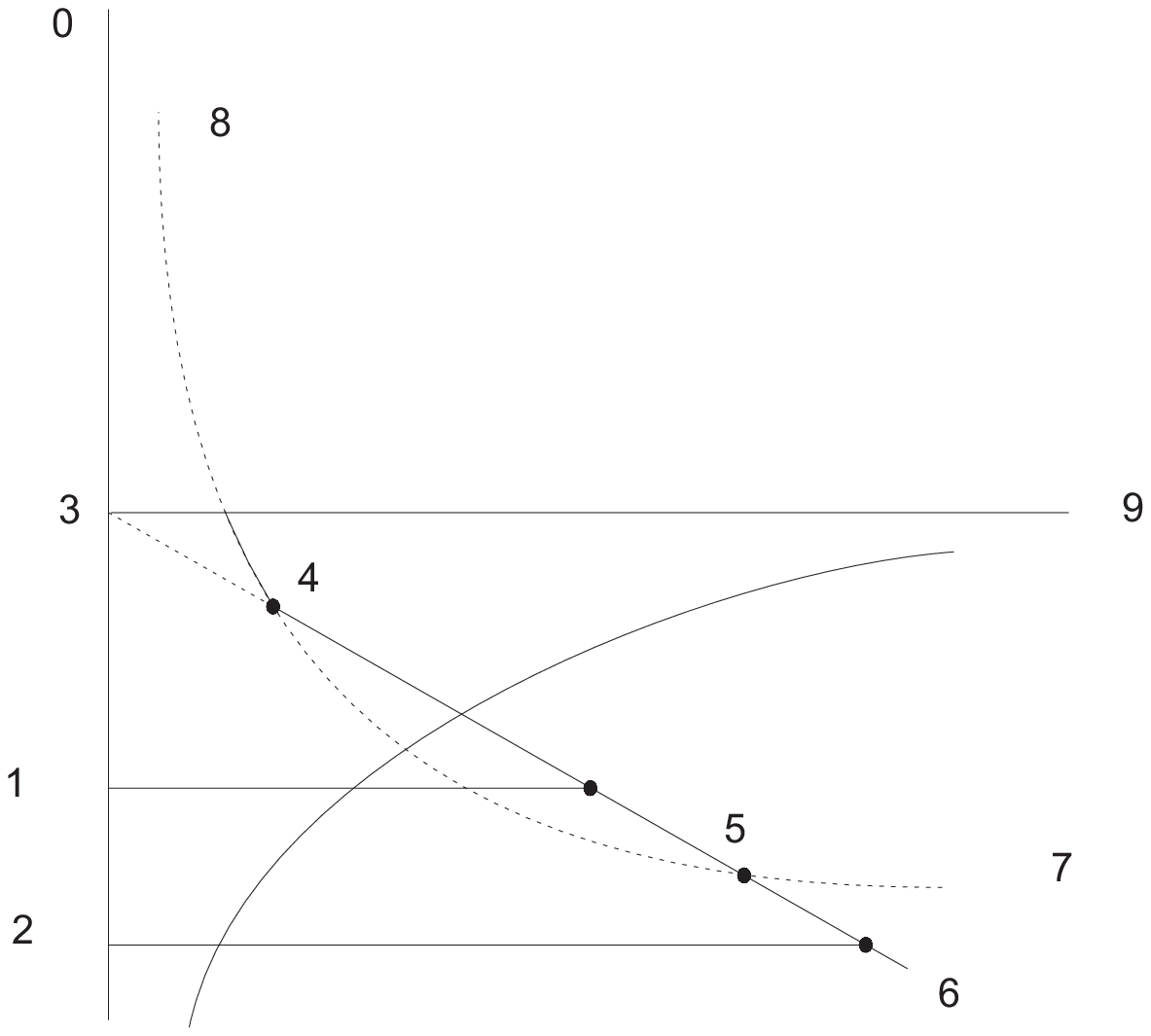}
\end{center}
\begin{center}
Figure 1
\end{center}
\end{figure}

\medskip

\textbf{Case I:} $(v_{+},u_{+})\in \Omega_{super}\bigcap\{u_{+}<0\}$
and $u_{b}<u_{*}$. Here $(v_{*},u_{*})$ is an intersection point of
$BL(v_{+},u_{+})$ and $S_{2}(v_{+},u_{+})$, ie.,
\begin{eqnarray}\label{1.6}
u_{+}=\frac{u_{+}}{v_{+}}v_{*}-
\sqrt{\left[P\left(\frac{1}{v_{*}}\right)-P\left(\frac{1}{v_{+}}\right)\right](v_{+}-v_{*})},\
\ \ u_{*}=\frac{u_{+}}{v_{+}}v_{*}.
\end{eqnarray}
Then there exists a unique $v_{b}$ such that $(v_{b},u_{b})\in
BL(v_{+},u_{+})$, and the time asymptotic state of solution is a
boundary layer $(\tilde{v},\tilde{u})(x)$ which connects
$(v_{b},u_{b})$ with $(v_{+},u_{+})$, see Figure 1. By the relation
of $\rho$ and $v$, then we can say that boundary layer
$(\tilde{\rho},\tilde{u})(x)$ connects $(\rho_{b},u_{b})$ with
$(\rho_{+},u_{+}).$ The boundary layer $(\tilde{\rho},\tilde{u})(x)$
will be explained in next section.

\medskip

 \textbf{Case II:}
$(v_{+},u_{+})\in \Gamma_{trans}\bigcap\{u_{+}<0\}$ and
$u_{b}<u_{+}$. Then there exists a unique $v_{b}$ such that
$(v_{b},u_{b})\in BL(v_{+},u_{+})$, and the time-asymptotic state of
solution is a boundary layer $(\tilde{v},\tilde{u})(x)$ which
connects $(v_{b},u_{b})$ with $(v_{+},u_{+})$, see Figure 2. Here,
the boundary layer $(\tilde{v},\tilde{u})(x)$ is degenerate. That is
to say  boundary layer $(\tilde{\rho},\tilde{u})(x)$ connects
$(\rho_{b},u_{b})$ with $(\rho_{+},u_{+})$, and the boundary layer
$(\tilde{\rho},\tilde{u})(x)$ is degenerate.

\begin{figure}[h!]
\begin{center}
\psfrag{1}{$u_{b}$}
  \psfrag{3}{$O$}
  \psfrag{5}{$(v_{+},u_{+})$}
  \psfrag{6}{$BL(v_{+},u_{+})$}
  \psfrag{7}{$R_{2}(v_{+},u_{+})$}
  \psfrag{8}{$S_{2}(v_{+},u_{+})$}
  \psfrag{9}{$v$}
  \psfrag{0}{$u$}
\includegraphics[width=0.3\textwidth]{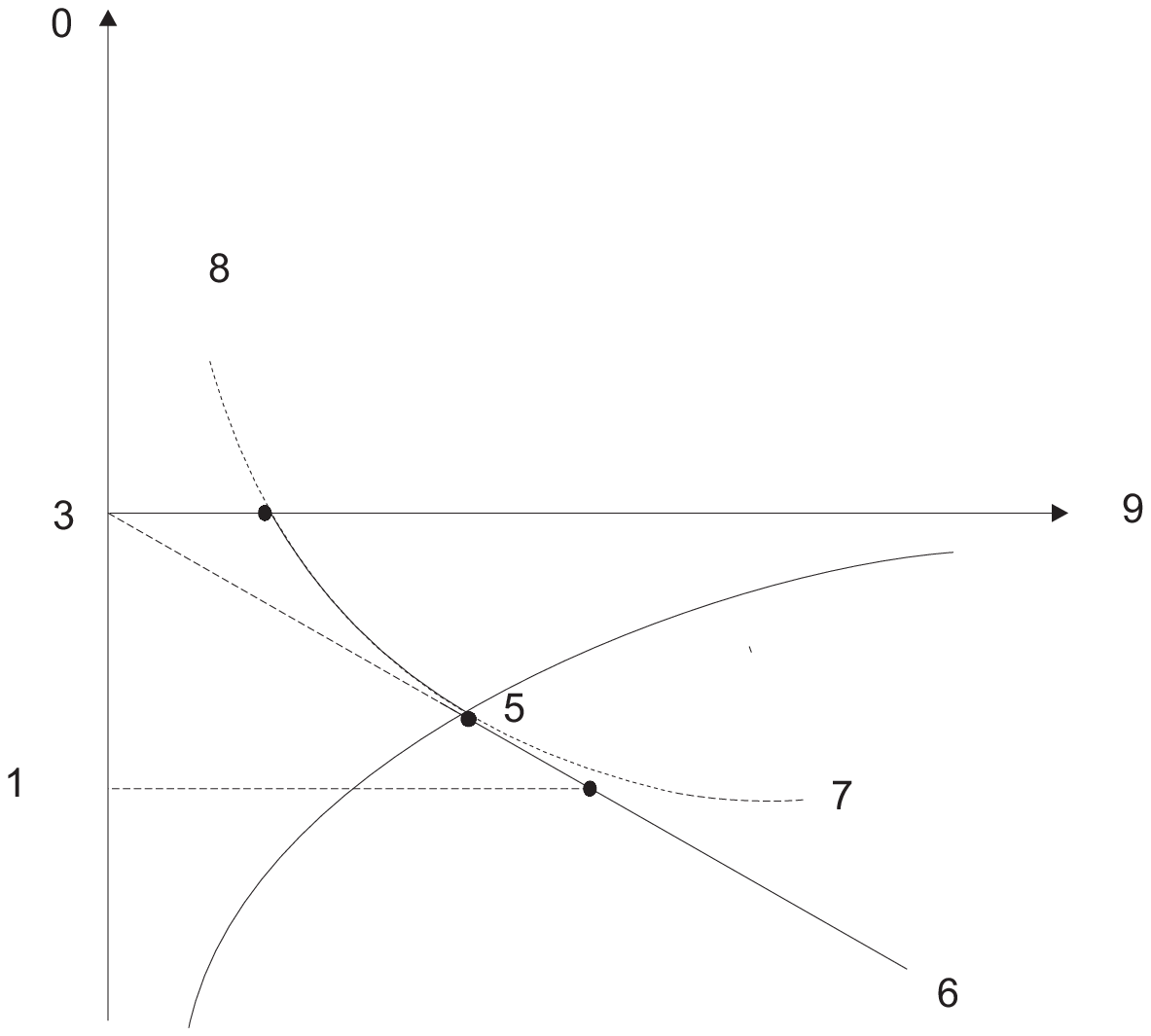}
\end{center}
\begin{center}
Figure 2
\end{center}
\end{figure}

\medskip

\textbf{Case III:} $(v_{+},u_{+})\in \Omega_{sub}\bigcap\{u_{+}<0\}$
and $u_{b}<u_{+}$. Here $(v_{*},u_{*})$ is an intersection point of
$R_{2}(v_{+},u_{+})$ and $\Gamma_{trans}$, ie.,
\begin{eqnarray}\label{1.6}
u_{+}-\sqrt{\gamma
A}\int_{v_{+}}^{v_{*}}s^{-\frac{\gamma+1}{2}}ds=-\sqrt{\gamma
A}v_{*}^{-\frac{\gamma-1}{2}},\ \ \ u_{*}=-\sqrt{\gamma
A}v_{*}^{-\frac{\gamma-1}{2}},
\end{eqnarray}
see Figure 3. This case is divided into two subcases:

\textbf{Subcase 1:} If $u_{*}\leq u_{b}<u_{+},$ then there exists a
unique $v_{b}$ such that $(v_{b},u_{b})\in R_{2}(v_{+},u_{+})$, and
the time-asymptotic state of solution is a 2-rarefaction wave
$(v^{R_{2}},u^{R_{2}})(\frac{x}{t}),$ which connects $(v_{b},u_{b})$
with $(v_{+},u_{+})$, to the corresponding Riemann problem,  while
the 2-rarefaction wave $(\rho^{R_{2}},u^{R_{2}})(\frac{x}{t})$
connects $(\rho_{b},u_{b})$ with $(\rho_{+},u_{+})$.

\textbf{Subcase 2:} If $u_{*}> u_{b},$ then there exists a unique
$v_{b}$ such that $(v_{b},u_{b})\in BL(v_{*},u_{*})$, and the
time-asymptotic state of solution is the superposition of a boundary
layer $(\tilde{v},\tilde{u})(x)$ connecting $(v_{b},u_{b})$ with
$(v_{*},u_{*})$, which is degenerate, and a 2-rarefaction wave
$(v^{R_{2}},u^{R_{2}})(\frac{x}{t})$  connecting $(v_{*},u_{*})$
with $(v_{+},u_{+})$, while boundary layer
$(\tilde{\rho},\tilde{u})(x)$ connects $(\rho_{b},u_{b})$ with
$(\rho_{*},u_{*})$, and a 2-rarefaction wave
$(\rho^{R_{2}},u^{R_{2}})(\frac{x}{t})$ connects $(\rho_{*},u_{*})$
with $(\rho_{+},u_{+})$.

\begin{figure}[h!]
\begin{center}
\psfrag{1}{$u_{b}$}
  \psfrag{2}{$u_{b}$}
  \psfrag{3}{$O$}
  \psfrag{4}{$(v_{*},u_{*})$}
  \psfrag{5}{$(v_{+},u_{+})$}
  \psfrag{6}{$BL(v_{*},u_{*})$}
  \psfrag{7}{$R_{2}(v_{+},u_{+})$}
  \psfrag{8}{$S_{2}(v_{+},u_{+})$}
  \psfrag{9}{$v$}
  \psfrag{0}{$u$}
\includegraphics[width=0.3\textwidth]{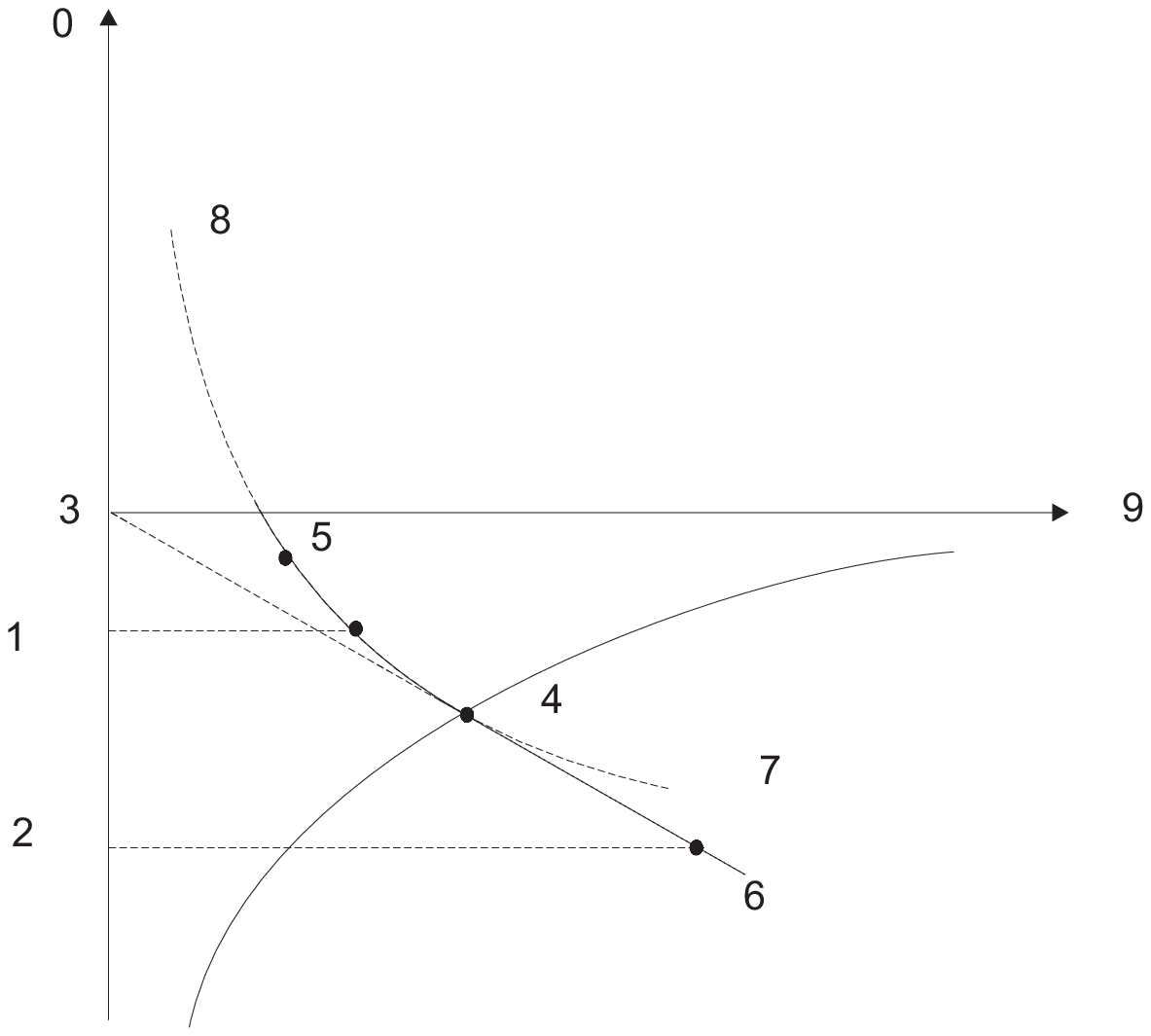}\\
\end{center}
\begin{center}
Figure 3
\end{center}
\end{figure}

\medskip

\textbf{Case IV:} $u_{+}>0$ and $u_{b}<0$. Here $(v_{*},u_{*})$ is
an intersection point of $R_{2}(v_{+},u_{+})$ and $\Gamma_{trans}$
which is defined by \eqref{1.6}, see Figure 4. This case is divided
into two subcases:

\textbf{Subcase 1:} If $u_{*}\leq u_{b}<0,$ then there exists a
unique $v_{b}$ such that $(v_{b},u_{b})\in R_{2}(v_{+},u_{+})$, and
the time-asymptotic state of solution is a 2-rarefaction wave
$(v^{R_{2}},u^{R_{2}})(\frac{x}{t}),$ which connects $(v_{b},u_{b})$
with $(v_{+},u_{+})$, to the corresponding Riemann problem, while a
2-rarefaction wave $(\rho^{R_{2}},u^{R_{2}})(\frac{x}{t})$ connects
$(\rho_{b},u_{b})$ with $(\rho_{+},u_{+}).$

\textbf{Subcase 2:} If $u_{*}> u_{b},$ then there exists a unique
$v_{b}$ such that $(v_{b},u_{b})\in BL(v_{*},u_{*})$, and the
time-asymptotic state of solution is the superposition of a boundary
layer $(\tilde{v},\tilde{u})(x)$ connecting $(v_{b},u_{b})$ with
$(v_{*},u_{*})$, which is degenerate, and a 2-rarefaction wave
$(v^{R_{2}},u^{R_{2}})(\frac{x}{t})$  connecting $(v_{*},u_{*})$
with $(v_{+},u_{+})$, while boundary layer
$(\tilde{\rho},\tilde{u})(x)$ connects $(\rho_{b},u_{b})$ with
$(\rho_{*},u_{*})$, and a 2-rarefaction wave
$(\rho^{R_{2}},u^{R_{2}})(\frac{x}{t})$ connects $(\rho_{*},u_{*})$
with $(\rho_{+},u_{+})$.

\begin{figure}[h!]
\begin{center}
 \psfrag{1}{$u_{b}$}
  \psfrag{2}{$u_{b}$}
  \psfrag{3}{$O$}
  \psfrag{4}{$(v_{*},u_{*})$}
  \psfrag{5}{$(v_{+},u_{+})$}
  \psfrag{6}{$BL(v_{*},u_{*})$}
  \psfrag{7}{$R_{2}(v_{+},u_{+})$}
  \psfrag{8}{$S_{2}(v_{+},u_{+})$}
  \psfrag{9}{$v$}
  \psfrag{0}{$u$}
\includegraphics[width=0.3\textwidth]{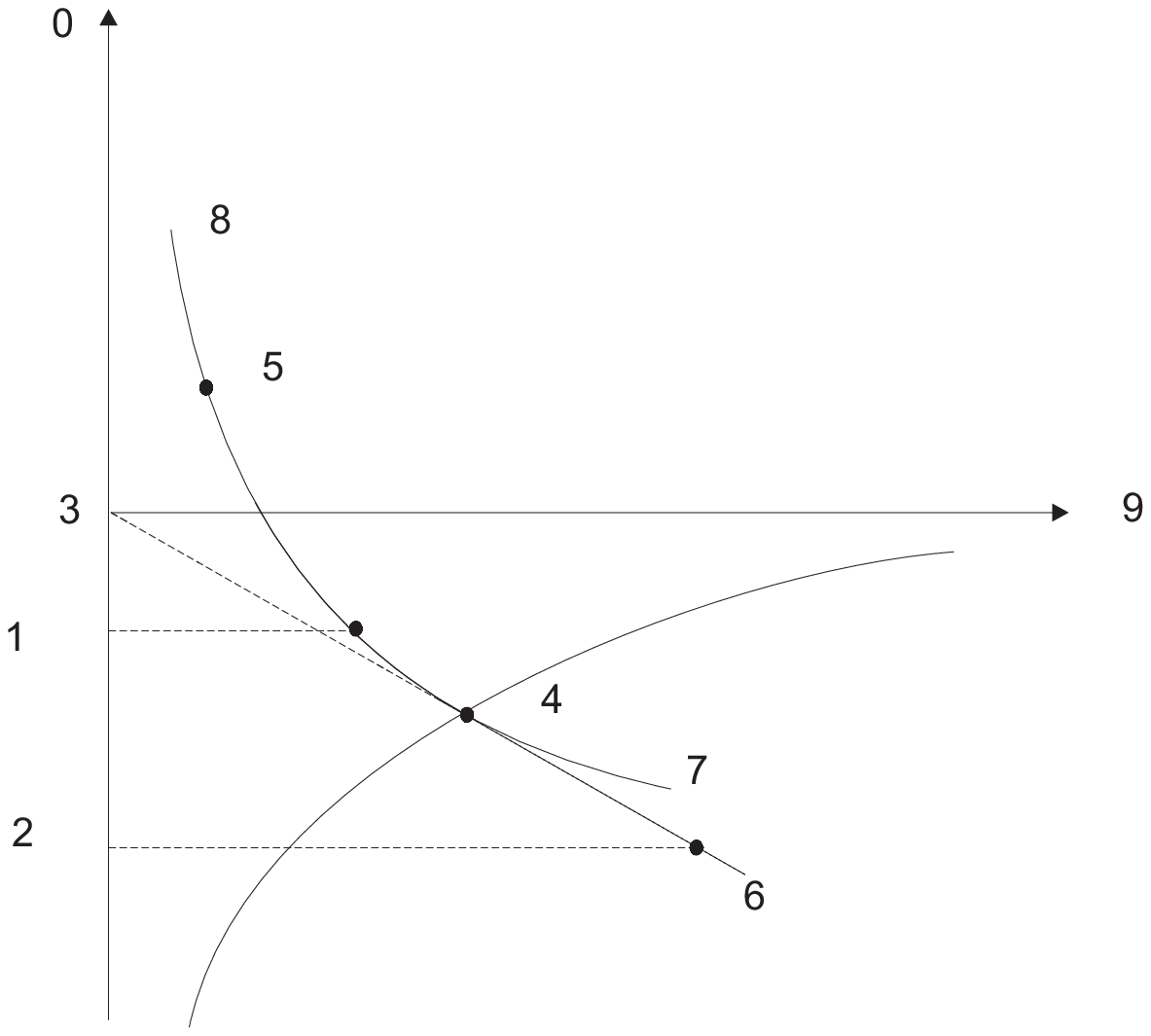}\\
\end{center}
\begin{center}
Figure 4
\end{center}
\end{figure}

\subsection{Boundary layer and  rarefaction wave}
In the paper, we study the subcase 2 in Case III or Case IV without
considering the other cases since the cases of the single wave have
been studied by Duan and Yang \cite{DY2013}. Recalling subcase 2 in
Case III or Case IV,  there exists a unique $v_{b}$ in phase plane
such that $(v_{b},u_{b})\in BL(v_{*},u_{*}),$ where $(v_{*},u_{*})$
is defined in \eqref{1.6}. And the solution to the initial boundary
value problem \eqref{1.1}, \eqref{1.a} and \eqref{1.b} for the
outflow problem on two-fluid Navier-Stokes-Poisson system  is
expected to tend to the superposition of a degenerate boundary layer
$(\tilde{\rho},\tilde{u})(x)$ connecting $(\rho_{b},u_{b})$ with
$(\rho_{*},u_{*})$ and a 2-rarefaction wave
$(\rho^{R_{2}},u^{R_{2}})(\frac{x}{t})$ connecting
$(\rho_{*},u_{*})$ with $(\rho_{+},u_{+})$ as $t$ $\rightarrow
+\infty$
 coupling the trivial profile of electric field $E=0.$

 First of all, we define the boundary layer $(\tilde{\rho},
\tilde{u})$ by the stationary solution to
\begin{eqnarray}\label{2.1}
&&\left\{\begin{aligned}
& \partial_{x}(\tilde{\rho}\tilde{u})=0,\ \ \  x\in \mathbb{R}_{+},\\
& \tilde{\rho}\tilde{u}\partial_{x}\tilde{u}+\partial_{x}P(\tilde{\rho})=\partial_{x}^{2}\tilde{u},\ \ \  x\in \mathbb{R}_{+},\\
&\tilde{u}(0)=u_{b},\ \ \ (\tilde{\rho},
\tilde{u})(+\infty)=(\rho_{*},u_{*}), \ \ \  \inf_{x\in
\mathbb{R}_{+}}\tilde{\rho}(x)>0.
\end{aligned}\right.
\end{eqnarray}
Integrating $\eqref{2.1}_{1}$ over $[x,+\infty)$ for $x>0$, and
letting $x\rightarrow 0,$ we obtain the value of $\tilde{\rho}(x)$
at the boundary $\{x=0\}$ as follows:
\begin{eqnarray}\label{2.2}
\rho_{b}:=\tilde{\rho}(0)=\frac{\rho_{*}u_{*}}{u_{b}}.
\end{eqnarray}
Since $u_{b}<0$, we have $u_{*}<0.$ The strength of the boundary
layer $(\tilde{\rho}, \tilde{u})(x)$ is measured by
\begin{eqnarray}\label{2.3}
\tilde{\delta}:=|u_{*}-u_{b}|.
\end{eqnarray}
In what follows let us present the existence and some known
properties of the boundary layer $(\tilde{\rho}, \tilde{u})(x)$
connecting $(\rho_{b},u_{b})$ with $(\rho_{*},u_{*})$ for the
stationary problem $\eqref{2.1}.$ Here we only list the properties
of the degenerate boundary layer. Please refer to \cite{KNZhd} or
\cite{MNinflow} for details.
\begin{lemma}\label{lem.Vw}
By the definition of $(v_{*},u_{*})$ in Subcase 2 in Case III or
Case IV (i.e. it is located at the transonic curve), then there
exists a solution $(\tilde{\rho}, \tilde{u})(x)$ to the stationary
problem \eqref{2.1} such that
$\tilde{u}=\frac{u_{*}}{v_{*}}\tilde{v},$
$\tilde{v}=\frac{1}{\tilde{\rho}}.$ Moreover, $\tilde{u}(x)$ is
monotonically increasing $(\partial_{x}\tilde{u}\geq 0)$ and
converges to $u_{*}$ algebraically as x tends to infinity.
Precisely, there exists a positive constant C such that
\begin{eqnarray}\label{2.4a}
|\partial_{x}^{k}[\tilde{\rho}-\rho_{*},\tilde{u}-u_{*}]|\leq
\frac{C\tilde{\delta}^{k+1}}{(1+\tilde{\delta}x)^{k+1}},\ \ \
k=0,1,2,\cdots.
\end{eqnarray}

\end{lemma}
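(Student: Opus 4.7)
\medskip
\noindent\textbf{Proof proposal.}
The plan is to reduce the ODE system \eqref{2.1} to a scalar first-order ODE for $\tilde v$, analyze it via a phase-plane style argument, and then bootstrap decay of $\partial_x^k(\tilde v - v_*)$ from the decay of $\tilde v - v_*$. First I would integrate $\eqref{2.1}_1$ from $x$ to $+\infty$ to obtain the conservation law $\tilde\rho\tilde u \equiv \rho_* u_*$, which immediately yields $\tilde u = (u_*/v_*)\tilde v$ and the boundary value $v_b = v_* u_b/u_*$, so $(v_b,u_b)\in BL(v_*,u_*)$ with $v_b > v_*$ (because $u_b < u_* <0$). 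Substituting this into $\eqref{2.1}_2$, rewriting $\partial_x^2\tilde u = (u_*/v_*)\partial_x^2\tilde v$ and $\partial_x P(\tilde\rho) = -A\gamma\tilde v^{-\gamma-1}\partial_x\tilde v$, and integrating once using $\partial_x\tilde v\to 0$ as $x\to+\infty$, I obtain the scalar first-order ODE
\begin{equation*}
\partial_x\tilde v \;=\; F(\tilde v)\;:=\; m(\tilde v - v_*) + \frac{A}{m}\bigl(\tilde v^{-\gamma} - v_*^{-\gamma}\bigr),\qquad m := u_*/v_* < 0,
\end{equation*}
with $\tilde v(0) = v_b$.

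The second step is to exploit the \emph{transonic} condition that defines $(v_*,u_*)$ in Subcase 2 of Case III/IV, namely $u_*^2 = \gamma A v_*^{-(\gamma-1)}$. A direct computation gives $F(v_*) = 0$ and $F'(v_*) = m - (A\gamma/m)v_*^{-\gamma-1} = (u_*v_*)^{-1}(u_*^2 - A\gamma v_*^{-(\gamma-1)}) = 0$, while $F''(v_*) = (A\gamma(\gamma+1)/m)v_*^{-\gamma-2} < 0$. Hence the equilibrium $\tilde v = v_*$ is \emph{degenerate}, and Taylor expansion yields $F(\tilde v) = -K(\tilde v-v_*)^2 + O((\tilde v-v_*)^3)$ with $K>0$. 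On the interval $(v_*,v_b]$ one checks by the mean value theorem and convexity that $F(\tilde v)<0$, so the unique local solution of the ODE with $\tilde v(0)=v_b$ is strictly decreasing and, since it cannot cross the equilibrium in finite $x$, exists globally on $\R_+$ and satisfies $\tilde v(x)\downarrow v_*$ as $x\to+\infty$. Translating back via $\tilde u = m\tilde v$ gives monotonicity of $\tilde u$ (increasing, since $m<0$ and $\tilde v$ is decreasing) and the boundary/far-field conditions.

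The main technical step — and the one I expect to be the real obstacle — is proving the algebraic decay rate \eqref{2.4a} \emph{uniformly} in the small strength parameter $\tilde\delta = |u_*-u_b|$. The natural attack is an ODE comparison argument: since $\partial_x\tilde v = -K(\tilde v-v_*)^2(1+O(\tilde v-v_*))$, for $\tilde\delta$ small the solution of $y' = -\tfrac{K}{2}y^2$, $y(0)=v_b-v_*\sim\tilde\delta/|m|$, is an upper barrier, yielding $\tilde v(x)-v_* \le C\tilde\delta/(1+\tilde\delta x)$; a matching lower barrier gives the reverse inequality. The $k=0$ estimate for $\tilde u - u_*$ follows from $\tilde u - u_* = m(\tilde v - v_*)$, and $\tilde\rho - \rho_* = -\tilde v^{-1}v_*^{-1}(\tilde v-v_*)$ supplies the density estimate since $\inf\tilde\rho>0$ by monotonicity. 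For the higher derivatives, I would differentiate the ODE relation $\partial_x\tilde v = F(\tilde v)$ inductively: $\partial_x^{k+1}\tilde v$ is a polynomial in $F(\tilde v), F'(\tilde v),\dots,F^{(k)}(\tilde v)$ and lower-order $\partial_x^j\tilde v$, each factor of $F^{(j)}(\tilde v)$ with $j\le 1$ contributing an extra $(\tilde v-v_*)^2$ and each $F^{(j)}$ with $j\ge 2$ being bounded, so that by induction $|\partial_x^k(\tilde v-v_*)|\le C\tilde\delta^{k+1}/(1+\tilde\delta x)^{k+1}$. The same estimate for $\tilde u$ then follows by linearity. The delicate point throughout is keeping track of the constants so they remain independent of $\tilde\delta$, which is exactly what makes the estimate useful for the forthcoming stability analysis.
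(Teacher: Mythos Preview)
Your sketch is correct and is essentially the standard argument. Note, however, that the paper does \emph{not} prove Lemma~\ref{lem.Vw} at all: immediately before the statement the authors write ``Please refer to \cite{KNZhd} or \cite{MNinflow} for details'' and simply quote the result. Your reduction to the scalar ODE $\partial_x\tilde v = F(\tilde v) = m(\tilde v - v_*) + (A/m)(\tilde v^{-\gamma}-v_*^{-\gamma})$, the use of the transonic condition $u_*^2 = \gamma A v_*^{-(\gamma-1)}$ to obtain $F(v_*)=F'(v_*)=0$, $F''(v_*)<0$, and the comparison with $y'=-Ky^2$ to get the $(1+\tilde\delta x)^{-1}$ decay are exactly the steps carried out in those references. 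So what you have written is, in effect, a reconstruction of the proof the paper outsources.

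One small imprecision worth fixing: in your bootstrap for higher derivatives you say ``each factor of $F^{(j)}(\tilde v)$ with $j\le 1$ contributing an extra $(\tilde v-v_*)^2$''. In fact $F$ contributes two powers of $(\tilde v-v_*)$ but $F'$ contributes only one, since $F'(v_*)=0$ with $F''(v_*)\neq 0$. The induction still closes: applying Fa\`a di Bruno to $\partial_x^k(F(\tilde v))$, a partition of $\{1,\dots,k\}$ into $m$ blocks of sizes $b_1,\dots,b_m$ produces a term bounded by $C|\tilde v-v_*|^{\max(2-m,0)}\prod_i|\tilde v-v_*|^{b_i+1} = C|\tilde v-v_*|^{k+m+\max(2-m,0)} \le C|\tilde v-v_*|^{k+2}$, which is exactly what is needed for $|\partial_x^{k+1}(\tilde v-v_*)|\le C\tilde\delta^{k+2}/(1+\tilde\delta x)^{k+2}$. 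With this correction the argument is complete.
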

\vspace{3mm}

Since the 2-rarefaction wave
$\left[\rho^{R_{2}},u^{R_{2}}\right](\frac{x}{t})$ is a weak
solution, we shall construct a smooth approximation for the
2-rarefaction wave above in the following. Firstly, consider the
Riemann problem for Burger's equation:
\begin{eqnarray}\label{2.5}
&&\left\{\begin{aligned} & \partial_{t}w+w\partial_{x}w=0,\\
&w(0,x)=w_{0}(x)=\left\{\begin{aligned} &w_{-},\ \ \ x<0,\\
&w_{+},\ \ \ x>0,\end{aligned}\right.
\end{aligned}\right.
\end{eqnarray}
where $w_{-}<w_{+}.$  Then it is well known that \eqref{2.5} has a
continuous weak solution $w^{R_{2}}(\frac{x}{t})$ whose explicit
form is given by
\begin{eqnarray}\label{2.6}
w^{R_{2}}(\frac{x}{t})=\left\{\begin{aligned} &w_{-},\ \ \ x<w_{-}t,\\
&\frac{x}{t},\ \ \ w_{-}t\leq x\leq w_{+}t,\\
&w_{+},\ \ \ x>w_{+}t.
\end{aligned}\right.
\end{eqnarray}
Moreover, $w^{R_{2}}(\frac{x}{t})$ can be approximated by the smooth
function $\overline{w}(t,x)$ which is a solution to
\begin{eqnarray}\label{2.df9}
&&\left\{\begin{aligned} & \pa_t\bar{w}+\bar{w}\pa_x\bar{w}=0,\\
&\bar{w}(0,x)=\bar{w}_{0}(x)=\left\{\begin{aligned} &w_{-},\ \ \ \ \ \ \  \ \ \ \ \ \  \ \ \ \ \ \ \ \ \ \ \ \ \  \ \ x<0,\\
&w_{-}+C_{q}\bar{\delta}\int_{0}^{\epsilon x}y^{q}e^{-y}dy,\ \ \
x>0,\end{aligned}\right.
\end{aligned}\right.
\end{eqnarray}
where $\bar{\delta}:=w_{+}-w_{-},\ q\geq 10$ is a constant, $C_{q}$
is a constant such that $C_{q}\int_{0}^{\infty}y^{q}e^{-y}dy=1,$ and
$\epsilon\leq 1$ is a positive constant to be determined later. Then
we have the following lemma.

\begin{lemma}\label{cl.Re.Re.}
 Let $\bar{\delta}=w_+-w_-$ be the wave strength
of the 2-rarefaction wave. Then the problem \eqref{2.df9} has a
unique smooth solution $\bar{w}(x,t)$ which satisfies the following
properties:

\noindent$(i)$ $0<w_-<\bar{w}(x,t)<w_+$, $\pa_x\bar{w}\geq0$ for
$x\in \R$ and $t\geq0$.

\noindent$(ii)$ For any $p$ $(1\leq p\leq +\infty)$, there exists a
constant $C_{p,q}$ such that for $t\geq 0$
$$\|\partial_{x}\bar{w}\|_{L^{p}}\leq C_{p,q}\min\{\bar{\delta}\epsilon^{1-\frac{1}{p}},\bar{\delta}^{\frac{1}{p}}t^{-1+\frac{1}{p}}\},$$
$$\|\partial_{x}^{2}\bar{w}\|_{L^{p}}\leq C_{p,q}\min\{\bar{\delta}\epsilon^{2-\frac{1}{p}},\bar{\delta}^{\frac{1}{q}}\epsilon ^{1-\frac{1}{p}+\frac{1}{q}}
t^{-1+\frac{1}{q}}\}.$$

 \noindent$(iii)$ When $x\leq w_{-}t,$\ \
$\bar{w}-w_{-}=\partial_{x}\bar{w}=\partial_{x}^{2}\bar{w}=0.$

 \noindent$(iv)$
$\lim\limits_{t\rightarrow+\infty}\sup\limits_{x\in\R}|\bar{w}(x,t)-w^{R_{2}}(\frac{x}{t})|=0$.

\end{lemma}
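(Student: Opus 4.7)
The plan is to construct $\bar{w}$ explicitly by the method of characteristics and then read off all four properties from the implicit representation. Observe that $\bar{w}_0\in C^{q-1}(\R)$ is non-decreasing with $\bar{w}_0=w_-$ on $(-\infty,0]$ and $\bar{w}_0(+\infty)=w_+$. Since $\bar{w}_0'\geq 0$, the map $X(x_0,t):=x_0+t\bar{w}_0(x_0)$ satisfies $\partial_{x_0}X=1+t\bar{w}_0'(x_0)\geq 1$ and is therefore a $C^{q-1}$ diffeomorphism of $\R$ onto $\R$ for every $t\geq 0$. Setting $\bar{w}(x,t):=\bar{w}_0(x_0(x,t))$ via the inverse produces the unique classical solution of \eqref{2.df9} global in time. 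Property (i) is then immediate: $\bar{w}$ inherits the range of $\bar{w}_0$, strictly on $\{x>w_-t\}$ where $x_0>0$, and differentiating gives $\partial_x\bar{w}=\bar{w}_0'(x_0)/(1+t\bar{w}_0'(x_0))\geq 0$. Property (iii) is equally direct: $x\leq w_-t$ forces $x_0\leq 0$, where every derivative of $\bar{w}_0$ vanishes.

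For (ii) I would pass to the Lagrangian coordinate $x_0$, using the Jacobian $dx=(1+t\bar{w}_0'(x_0))\,dx_0$ and the formulas
\[
\partial_x\bar{w}=\frac{\bar{w}_0'(x_0)}{1+t\bar{w}_0'(x_0)},\qquad \partial_x^2\bar{w}=\frac{\bar{w}_0''(x_0)}{(1+t\bar{w}_0'(x_0))^3}.
\]
The $\epsilon$-scaled bounds $\bar{\delta}\epsilon^{1-1/p}$ and $\bar{\delta}\epsilon^{2-1/p}$ follow by dropping the denominator to $1$ and using the explicit profile $\bar{w}_0'(x_0)=C_q\bar{\delta}\epsilon(\epsilon x_0)^q e^{-\epsilon x_0}$ together with the analogous expression for $\bar{w}_0''$, after the substitution $\eta=\epsilon x_0$. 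The time-decaying bounds follow from endpoint-plus-interpolation arguments: for $\partial_x\bar{w}$, combine $\|\partial_x\bar{w}\|_\infty\leq 1/t$ (since $s/(1+ts)\leq 1/t$ for $s\geq 0$) with $\|\partial_x\bar{w}\|_{L^1}=\bar{\delta}$ (integrating the non-negative $\partial_x\bar{w}$ across $\R$), and apply $\|f\|_{L^p}\leq\|f\|_\infty^{1-1/p}\|f\|_{L^1}^{1/p}$ to get $\bar{\delta}^{1/p}t^{-1+1/p}$. For $\partial_x^2\bar{w}$ the argument is parallel but at a different endpoint: produce $\|\partial_x^2\bar{w}\|_{L^q}\leq C\bar{\delta}^{1/q}\epsilon\,t^{-1+1/q}$ and $\|\partial_x^2\bar{w}\|_\infty\leq C\bar{\delta}^{1/q}\epsilon^{1+1/q}t^{-1+1/q}$, both obtained from the Lagrangian integral $\int |\bar{w}_0''|^p/(1+t\bar{w}_0')^{3p-1}\,dx_0$ by splitting at the transition scale $\epsilon x_0\sim (t\bar{\delta}\epsilon)^{-1/q}$ where $t\bar{w}_0'$ crosses $1$, and then use $\|f\|_{L^p}\leq\|f\|_{L^q}^{q/p}\|f\|_\infty^{1-q/p}$ for $p\geq q$ to land on $\bar{\delta}^{1/q}\epsilon^{1-1/p+1/q}t^{-1+1/q}$. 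The careful splitting at this transition scale, together with the fact that $\bar{w}_0'$ vanishes to order $q$ at $x_0=0$ while $\bar{w}_0''$ only to order $q-1$, is the main technical obstacle.

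Finally, for (iv), the implicit relation $\bar{w}(x,t)=\bar{w}_0(x_0)$ with $x=x_0+t\bar{w}_0(x_0)$ rewrites as $\bar{w}(x,t)=x/t-x_0/t$. On any band $x/t\in[w_-+\eta,w_+-\eta]$ with $\eta>0$, strict monotonicity of $\bar{w}_0$ confines $x_0$ to a $t$-independent bounded set, so $x_0/t\to 0$ uniformly and $\bar{w}\to x/t=w^{R_2}(x/t)$. On $\{x/t\leq w_-\}$ the two already coincide by (iii); on $\{x/t\geq w_+\}$ the characteristic root $x_0$ diverges to $+\infty$, where $\bar{w}_0-w_+$ decays like a polynomial-times-exponential, giving uniform convergence to $w_+=w^{R_2}(x/t)$. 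Combining these three regions yields the stated uniform convergence.
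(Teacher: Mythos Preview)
The paper does not supply a proof of this lemma; it is quoted as a standard result on smooth approximations of rarefaction waves, originating with Matsumura--Nishihara and reproduced (with the present $\epsilon$-scaled initial profile) in works such as Kawashima--Zhu. Your proposal follows exactly that classical route: solve Burgers' equation by characteristics, read off (i) and (iii) from the implicit representation, and obtain the $L^p$ bounds in (ii) by passing to the Lagrangian variable $x_0$ and exploiting the explicit form of $\bar w_0'$. This is the right plan and matches the literature proofs.

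Two places deserve tightening. First, your interpolation for $\|\partial_x^2\bar w\|_{L^p}$ between $L^q$ and $L^\infty$ only covers $p\geq q$, whereas the applications in the paper (with $q=10$) require $p=1$ and $p=2$. For $1\leq p<q$ you must either run the transition-scale splitting directly at exponent $p$, or produce an $L^1$ endpoint and interpolate between $L^1$ and $L^q$; both work, but neither is contained in what you wrote. Second, your region decomposition in (iv) omits the strips $w_-\leq x/t\leq w_-+\eta$ and $w_+-\eta\leq x/t\leq w_+$. The standard fix is to note that on each of these strips both $\bar w$ and $w^{R_2}$ lie within $O(\eta)$ of the corresponding end state, so the supremum over the strips is $O(\eta)$; one then sends $t\to\infty$ first and $\eta\to 0$ afterwards. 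With these two routine completions your argument is correct and coincides with the standard proof the paper is citing.
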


Then the smooth approximate rarefaction wave
$\left[\rho^{r_{2}},u^{r_{2}}\right](x,t)$ which corresponds to the
rarefaction wave $\left[\rho^{R_{2}},u^{R_{2}}\right](\frac{x}{t})$
can be defined as follows:
\begin{eqnarray}\label{1.18}
\begin{split}
\left\{\begin{array}{rll}
&u^{r_{2}}+C(\rho^{r_{2}})=\bar{w}(x,1+t),\  w_{-}=u_{*}+C(\rho_{*})=0,\ \ w_{+}=u_{+}+C(\rho_{+})>0,\\[2mm]
&u^{r_{2}}=u_{+}-\sqrt{\gamma
A}\int_{v_{+}}^{v^{r_{2}}}s^{-\frac{\gamma+1}{2}} ds,\ \
v^{r_{2}}=\frac{1}{\rho^{r_{2}}},\ \  v_{+}=\frac{1}{\rho_{+}},
\end{array}
\right.
\end{split}
\end{eqnarray}
where $\bar{w}(x,t)$  is given in \eqref{2.df9}.

 It is easy to
obtain $[\rho^{r_{2}},u^{r_{2}}](x,t)$
 satisfies
\begin{eqnarray}\label{ME2}
\left\{
\begin{array}{clll}
\begin{split}
& \partial_{t}\rho^{r_{2}}+\partial_{x}(\rho^{r_{2}} u^{r_{2}})=0,\\[2mm]
&\rho^{r_{2}}\pa_{t}u^{r_{2}}+\rho^{r_{2}}u^{r_{2}}\pa_{x}u^{r_{2}}+\pa_{x}P(\rho^{r_{2}})=0.
\end{split}
\end{array}
\right.
\end{eqnarray}
Here we restrict $[\rho^{r_{2}},u^{r_{2}}](x,t)$ in the half space
$\{x\geq 0\}$. Then one has
\begin{lemma}\label{cl.Re.Re2.}
Let $\delta_{r}=|\rho_+-\rho_*|+|u_+-u_*|$ be the wave strength of
the 2-rarefaction wave. Then the smooth approximate 2-rarefaction
wave $[\rho^{r_{2}},u^{r_{2}}](x,t)$  constructed in \eqref{1.18}
has the following properties:

\noindent$(i)$ $\pa_xu^{r_{2}}\geq0$,
$\rho_{*}<\rho^{r_{2}}(x,t)<\rho_{+}$,
    $u_{*}<u^{r_{2}}(x,t)<u_{+}$, $\pa_xu^{r_{2}}\thicksim |\pa_x\rho^{r_{2}}|$
for $x\in\R_{+}$ and $t\geq0$.

\noindent$(ii)$ For any $p$ $(1\leq p\leq +\infty)$, there exists a
constant $C_{p,q}$ such that for $t>0$,
$$
\|\pa_x[\rho^{r_{2}}, u^{r_{2}}]\|_{L^p(\R_{+})}\leq
C_{p,q}\min\{\delta_{r}\eps^{1-\frac{1}{p}},
\delta_{r}^{\frac{1}{p}}(1+t)^{-1+\frac{1}{p}}\},
$$
$$
\|\pa^2_x[\rho^{r_{2}}, u^{r_{2}}]\|_{L^p(\R_{+})}\leq
C_{p,q}\min\{\delta_{r}\eps^{2-\frac{1}{p}},
\delta_{r}^{\frac{1}{q}}\epsilon
^{1-\frac{1}{p}+\frac{1}{q}}(1+t)^{-1+\frac{1}{q}}\}.
$$

\noindent$(iii)$ $[\rho^{r_{2}},u^{r_{2}}](0,t)=[\rho_{*},u_{*}].$

\noindent$(iv)$
$\lim\limits_{t\rightarrow+\infty}\sup\limits_{x\in\R_{+}}\left|[\rho^{r_{2}},
u^{r_{2}}](x,t)-\left[\rho^{R_{2}},u^{R_{2}}\right](\frac{x}{t})
\right|=0$.

\end{lemma}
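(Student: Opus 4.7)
The plan is to transfer all four properties from the Burgers profile $\bar{w}(x,1+t)$ of Lemma~\ref{cl.Re.Re.} to the pair $[\rho^{r_{2}},u^{r_{2}}](x,t)$ by writing the latter as a smooth function of the former through the 2-rarefaction curve. For the polytropic pressure law $P(\rho)=A\rho^{\gamma}$, the other Riemann invariant $u - \tfrac{2}{\gamma-1}C(\rho)$ is constant along $R_{2}(v_{+},u_{+})$. Combining this with the defining relation $u^{r_{2}}+C(\rho^{r_{2}})=\bar{w}(x,1+t)$ yields the affine identities
\begin{equation*}
u^{r_{2}}(x,t) \;=\; \tfrac{2}{\gamma+1}\,\bar{w}(x,1+t) + b_{1},\qquad
C(\rho^{r_{2}}(x,t)) \;=\; \tfrac{\gamma-1}{\gamma+1}\,\bar{w}(x,1+t) + b_{2},
\end{equation*}
with constants $b_{1},b_{2}$ fixed by matching at $(\rho_{+},u_{+})$. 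Since $w_{-}=u_{*}+C(\rho_{*})=0$ corresponds to the state $(\rho_{*},u_{*})$ and $w_{+}=u_{+}+C(\rho_{+})$ corresponds to $(\rho_{+},u_{+})$, the range $w_{-}\leq\bar{w}\leq w_{+}$ translates exactly to $(\rho^{r_{2}},u^{r_{2}})$ moving along $R_{2}$ from $(\rho_{*},u_{*})$ to $(\rho_{+},u_{+})$.

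From here (iii) is immediate: Lemma~\ref{cl.Re.Re.}(iii) gives $\bar{w}(0,1+t)=w_{-}=0$ (since $0\leq w_{-}(1+t)=0$), and the affine relations then pin $[\rho^{r_{2}},u^{r_{2}}](0,t)=[\rho_{*},u_{*}]$. Assertion (i) follows from Lemma~\ref{cl.Re.Re.}(i): the monotonicity $\partial_{x}\bar{w}\geq 0$ transfers to $\partial_{x}u^{r_{2}}\geq 0$ and, via $C'(\rho)>0$, to $\partial_{x}\rho^{r_{2}}\geq 0$; the strict bounds $w_{-}<\bar{w}<w_{+}$ for $x>0$ give the strict bounds on $\rho^{r_{2}},u^{r_{2}}$; and the comparability $\partial_{x}u^{r_{2}}\sim|\partial_{x}\rho^{r_{2}}|$ is a consequence of both being nonzero bounded multiples of $\partial_{x}\bar{w}$ on the compact range of $\rho^{r_{2}}$. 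Assertion (iv) follows by combining Lemma~\ref{cl.Re.Re.}(iv) with the uniform continuity of the affine map: $\bar{w}(x,1+t)\to w^{R_{2}}(x/t)$ uniformly, and the same affine relations parameterize the exact rarefaction $[\rho^{R_{2}},u^{R_{2}}](x/t)$ in terms of $w^{R_{2}}(x/t)$.

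The only part demanding more care is the derivative bound (ii). Differentiating the affine identities gives $\partial_{x}u^{r_{2}}=\tfrac{2}{\gamma+1}\partial_{x}\bar{w}$ and $\partial_{x}^{2}u^{r_{2}}=\tfrac{2}{\gamma+1}\partial_{x}^{2}\bar{w}$, so the estimates for $u^{r_{2}}$ import directly from Lemma~\ref{cl.Re.Re.}(ii) with $t$ replaced by $1+t$. For $\rho^{r_{2}}$, smoothness of $\rho\mapsto C(\rho)$ with bounded derivatives on the compact range makes $\partial_{x}\rho^{r_{2}}$ a bounded multiple of $\partial_{x}\bar{w}$ (again direct), while $\partial_{x}^{2}\rho^{r_{2}}$ picks up an additional term proportional to $(\partial_{x}\bar{w})^{2}$. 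Controlling this quadratic term is the main technical step: one writes $\|(\partial_{x}\bar{w})^{2}\|_{L^{p}}=\|\partial_{x}\bar{w}\|_{L^{2p}}^{2}$, applies Lemma~\ref{cl.Re.Re.}(ii) at exponent $2p$, and verifies that $\delta_{r}^{2}\epsilon^{2-1/p}$ and the corresponding time-decay factor are dominated by the stated bounds (using $\delta_{r}\leq 1$ and interpolating the min using the free parameter $q\geq 10$). Once this bookkeeping is done, the rest of (ii) is a clean transfer from the Burgers profile.
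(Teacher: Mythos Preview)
Your proposal is correct and follows exactly the route the paper intends: the paper states Lemma~\ref{cl.Re.Re2.} without proof, treating it as a direct transfer of Lemma~\ref{cl.Re.Re.} through the construction~\eqref{1.18}, and your argument makes this transfer explicit via the affine parametrization of the 2-Riemann invariant. One minor remark: you invoke ``$\delta_{r}\leq 1$'' when bounding the quadratic term $(\partial_{x}\bar{w})^{2}$, but the paper allows $\delta_{r}$ to be arbitrarily large; this is harmless because the paper explicitly permits its generic constants to depend on $\delta_{r}$, so you can simply absorb the extra factor of $\bar{\delta}$ into $C_{p,q}$ rather than assuming smallness.
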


Now, we define
\begin{eqnarray}\label{2.11}
[\hat{\rho},\hat{u}](x,t):=[\tilde{\rho},\tilde{u}](x)+[\rho^{r_{2}},
u^{r_{2}}](x,t)-[\rho_{*},u_{*}].
\end{eqnarray}
By a straightforward calculation, we have
\begin{eqnarray}\label{2.12}
&&\left\{\begin{aligned}
&\partial_{t}\hat{\rho}+\partial_{x}(\hat{\rho} \hat{u})=\hat{f},\ \ \ (x,t)\in \mathbb{R}_{+}\times\mathbb{R}_{+},\\
&\hat{\rho}(\partial_{t}\hat{u}+
\hat{u}\partial_{x}\hat{u})+\partial_{x}P(\hat{\rho})=\partial_{x}^{2}\hat{u}+\hat{g},\
\ \ (x,t)\in \mathbb{R}_{+}\times\mathbb{R}_{+},\\
&(\hat{\rho},\hat{u})(x,0)\rightarrow (\rho_{+},u_{+}), \ \
\mathrm{as} \ \ x\rightarrow+\infty,\ \
(\hat{\rho},\hat{u})(0,t)=(\rho_{b},u_{b}).
\end{aligned}\right.
\end{eqnarray}
where
\begin{eqnarray}\label{2.13}
&&\left\{\begin{aligned}
\hat{f}=&\partial_{x}\tilde{\rho}(u^{r_{2}}-u_{*})+\partial_{x}\tilde{u}(\rho^{r_{2}}-\rho_{*})+\partial_{x}\rho^{r_{2}}(\tilde{u}-u_{*})+\partial_{x}u^{r_{2}}(\tilde{\rho}-\rho_{*}),\\
\hat{g}=&-\partial_{x}^{2}u^{r_{2}}+\tilde{u}\partial_{x}\tilde{u}(\rho^{r_{2}}-\rho_{*})
+\hat{\rho}\left[\partial_{x}\tilde{u}(u^{r_{2}}-u_{*})+\partial_{x}u^{r_{2}}(\tilde{u}-u_{*})\right]\\
&+
\partial_{x}\tilde{\rho}\left[P'(\hat{\rho})-P'(\tilde{\rho})\right]
+
\partial_{x}\rho^{r_{2}}\left[P'(\hat{\rho})-P'(\rho^{r_{2}})
\right]-
\frac{P'(\rho^{r_{2}})}{\rho^{r_{2}}}\partial_{x}\rho^{r_{2}}(\tilde{\rho}-\rho_{*}).
\end{aligned}\right.
\end{eqnarray}
From $\eqref{2.1}_{1}$ and $\eqref{2.11}$, it is easy to know
\begin{eqnarray}\label{2.14}
&&\left\{
\begin{aligned}
&|\hat{f}|+|\hat{g}+\partial_{x}^{2}u^{r_{2}}|\leq
C\left\{\partial_{x}\tilde{u}(u^{r_{2}}-u_{*})+\partial_{x}u^{r_{2}}(u_{*}-\tilde{u})\right\},\\
&|\partial_{x}\hat{f}|\leq
C\left\{(|\partial_{x}^{2}\tilde{u}|+(\partial_{x}\tilde{u})^{2})(u^{r_{2}}-u_{*})+\partial_{x}\tilde{u}\partial_{x}u^{r_{2}}+|\partial_{x}^{2}u^{r_{2}}|+(\partial_{x}u^{r_{2}})^{2}\right\},
\end{aligned}
\right.
\end{eqnarray}
where $\partial_{x}\tilde{u}\geq 0,$ $\partial_{x}u^{r_{2}}\geq 0$
and $\tilde{u}\leq u_{*}\leq u^{r_{2}}.$

\subsection{Main results}

We can easily derive
$E(x,t)=-\int_{x}^{+\infty}[\rho_{i}(y,t)-\rho_{e}(y,t)]dy$ from
$\eqref{1.1}_{5}$ if we assume that $E(x,t)\rightarrow 0\
\mathrm{as}  \ x \rightarrow +\infty$ holds. Then we can define
$E(x,0)=-\int_{x}^{+\infty}[\rho_{i0}(y)-\rho_{e0}(y)]dy.$ Now we
are in a position to state our main results.
\begin{theorem}\label{main.Res.}
Let $\alpha=i,e$ and assume that constant states $u_{b}$, $u_{*}$
and the infinite state $(\rho_{+},u_{+})$ satisfy Subcase 2 either
in Case III or in Case IV. There exist some positive constants
$\varepsilon_{0}>0$ and $C_{0}>0$ such that if
\begin{equation}\label{retghy}
\begin{split}
\left\|[\rho_{\alpha0}(\cdot)-\hat{\rho}(\cdot,0),u_{\alpha0}(\cdot)-\hat{u}(\cdot,0)]\right\|_{H^1}^{2}
+\|E(\cdot,0)\|^{2}+\epsilon^{\frac{1}{10}}+\tilde{\delta}^{\frac{1}{9}}\leq
\varepsilon_{0}^{2},
\end{split}
\end{equation}
where $\epsilon>0$ is the parameter appearing in \eqref{2.df9}, then
the initial boundary value problem \eqref{1.1}, \eqref{1.a} and
\eqref{1.b} admits a unique global solution
$[\rho_{\alpha},u_{\alpha},E](x,t)$  satisfying
\begin{equation}\label{resultqa21hl}
\begin{split}
\sup_{t\geq
0}\left\|[\rho_{\alpha}-\hat{\rho},u_{\alpha}-\hat{u},E](\cdot,t)\right\|_{H^1}\leq
C_{0}\varepsilon_{0}.
\end{split}
\end{equation}
Moreover, the solution $[\rho_{\alpha},u_{\alpha},E](x,t)$ tends
time-asymptotically to the composite wave in the sense that
\begin{equation}\label{resultqa216y89}
\begin{split}
\lim_{t\rightarrow+\infty}\sup_{x\in
\R_{+}}\left|[\rho_{\alpha},u_{\alpha}](x,t)-\left[\tilde{\rho},\tilde{u}\right](x)-\left[\rho^{R_{2}},u^{R_{2}}\right]\left(\frac{x}{t}\right)+\left[\rho_{*},u_{*}\right]\right|=0,
\end{split}
\end{equation}
and
\begin{equation}\label{resultqa21gt}
\begin{split}
\lim_{t\rightarrow+\infty}\sup_{x\in \R_{+}}\left|E\right|=0.
\end{split}
\end{equation}
\end{theorem}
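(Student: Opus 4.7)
The plan is to establish Theorem \ref{main.Res.} by the standard continuation argument: local-in-time existence in $H^1$ combined with uniform a priori estimates on the perturbation. I would introduce
\[
\phi_\alpha := \rho_\alpha - \hat\rho,\qquad \psi_\alpha := u_\alpha - \hat u,\qquad \alpha=i,e,
\]
and keep $E$ as an unknown coupled through Poisson, $\pa_x E=\rho_i-\rho_e$, using the prescribed decay at $x=+\infty$ to fix $E(\cdot,t)$. Subtracting \eqref{2.12} from \eqref{1.1} gives a coupled system for $(\phi_i,\psi_i,\phi_e,\psi_e,E)$ with homogeneous Dirichlet data $\psi_\alpha(0,t)=0$ (via the compatibility $\tilde u(0)=u_b$), small $H^1$ initial data by \eqref{retghy}, and source terms built from $\hat f,\hat g$ of \eqref{2.13} plus the electric forcing $\pm\hat\rho\,E$ (the sign differs on ion and electron sides). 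Setting
\[
N(T)^2 := \sup_{0\le t\le T}\Bigl(\sum_{\alpha=i,e}\|[\phi_\alpha,\psi_\alpha](\cdot,t)\|_{H^1}^2 + \|E(\cdot,t)\|_{H^1}^2\Bigr),
\]
the goal is to prove $N(T)\le C\varepsilon_0$ whenever $N(T)\le\varepsilon_0^{1/2}$, which closes the bootstrap and yields \eqref{resultqa21hl}.

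For the zeroth-order estimate I would multiply each momentum equation by $\psi_\alpha$ and combine it with the continuity equation weighted by the relative-entropy-type density $\Phi_\alpha(\rho_\alpha,\hat\rho):=P(\rho_\alpha)-P(\hat\rho)-P'(\hat\rho)(\rho_\alpha-\hat\rho)$, which is quadratic in $\phi_\alpha$ since densities remain in a compact set. The viscosity supplies the dissipation $\sum_\alpha\int(\pa_x\psi_\alpha)^2\,dx$; the signs $\pa_x\tilde u\ge 0$ and $\pa_x u^{r_2}\ge 0$ produce nonnegative quadratic weights in $[\phi_\alpha,\psi_\alpha]$ from the linearized coupling with $\hat u$, and the boundary trace at $x=0$ is favorable because $u_b<0$ in the outflow regime (the Kawashima--Matsumura--Nishihara mechanism). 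For the electric contribution, summing over $\alpha$ the terms $\pm\rho_\alpha\psi_\alpha E$ and using $\pa_xE=\rho_i-\rho_e$ together with the continuity equations yields, modulo lower-order terms,
\[
\sum_\alpha (\pm)\int_0^{+\infty}\rho_\alpha \psi_\alpha E\,dx = -\frac{d}{dt}\,\frac12\|E(\cdot,t)\|^2 + \text{l.o.t.},
\]
which generates control of $\|E\|^2$. The inhomogeneities from \eqref{2.13} are handled via \eqref{2.14} and the space-time integrability provided by Lemmas \ref{lem.Vw} and \ref{cl.Re.Re2.}. First-order estimates are obtained by differentiating in $x$ and repeating the argument, with time derivatives recovered algebraically from the momentum equations; the elliptic identity $\|\pa_x E\|=\|\phi_i-\phi_e\|$ and interpolation close the $H^1$ control on $E$, while nonlinear cubic remainders are absorbed by Sobolev embedding in one space dimension under the smallness hypothesis $N(T)\le\varepsilon_0^{1/2}$.

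The hardest step, I expect, is the wave--wave interaction estimate between the degenerate boundary layer and the rarefaction fan. The bound \eqref{2.4a} provides only algebraic decay $|\pa_x^k[\tilde\rho-\rho_*,\tilde u-u_*]|\le C\tilde\delta^{k+1}(1+\tilde\delta x)^{-k-1}$, so terms of the form
\[
\int_0^t\!\!\!\int_0^{+\infty}\pa_x\tilde u\,(u^{r_2}-u_*)\,\psi_\alpha^2\,dx\,d\tau,\qquad \int_0^t\!\!\!\int_0^{+\infty}\pa_x u^{r_2}\,(u_*-\tilde u)\,\psi_\alpha^2\,dx\,d\tau,
\]
coming from $\hat f,\hat g$ must be split on the spatial tail (using algebraic decay of $\tilde u-u_*$) and on a bounded region (using the two competing bounds in Lemma \ref{cl.Re.Re2.}(ii)); the peculiar exponents $\tilde\delta^{1/9}$ and $\eps^{1/10}$ in \eqref{retghy} are calibrated exactly for this absorption. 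Once $N(T)\le C\varepsilon_0$ is proven, the asymptotic behaviour \eqref{resultqa216y89}--\eqref{resultqa21gt} follows from a standard compactness argument: $\int_0^\infty\|\pa_x[\phi_\alpha,\psi_\alpha](\cdot,t)\|^2\,dt<\infty$ together with the uniform $H^1$ bound gives $\|\pa_x[\phi_\alpha,\psi_\alpha](\cdot,t)\|\to0$, hence $\|[\phi_\alpha,\psi_\alpha](\cdot,t)\|_{L^\infty}\to0$ via $\|f\|_{L^\infty}^2\le 2\|f\|\,\|\pa_x f\|$; combined with \eqref{2.4a} and Lemma \ref{cl.Re.Re2.}(iv) this gives \eqref{resultqa216y89}, while $\|E\|_{L^\infty}^2\le 2\|E\|\,\|\phi_i-\phi_e\|\to0$ yields \eqref{resultqa21gt}.
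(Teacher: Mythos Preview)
Your overall architecture matches the paper: perturb around $[\hat\rho,\hat u]$, run a relative-entropy energy estimate at zeroth order, then get higher-order dissipation, close by continuation, and deduce the large-time limits from $\int_0^\infty\|\pa_x[\varphi_\alpha,\psi_\alpha]\|^2\,dt<\infty$ plus Sobolev. But there is one genuine gap.

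\medskip
\textbf{The electric-field remainder is not lower order.} When you compute
\[
\sum_{\alpha}(\pm)\int_{\R_+}\rho_\alpha\psi_\alpha E\,dx
= \int_{\R_+}E(\rho_iu_i-\rho_eu_e)\,dx-\int_{\R_+}E(\rho_i-\rho_e)\hat u\,dx,
\]
the first piece indeed gives $-\tfrac12\frac{d}{dt}\|E\|^2$ via $\pa_tE=\rho_eu_e-\rho_iu_i$, but the second piece, after integrating by parts with $\pa_xE=\rho_i-\rho_e$, produces
\[
-\frac{|u_b|}{2}E^2(0,t)+\frac12\int_{\R_+}\pa_x\hat u\,E^2\,dx.
\]
The boundary term is good, but $I_2:=\tfrac12\int\pa_x\hat u\,E^2\,dx$ has the \emph{wrong sign} (since $\pa_x\hat u\ge0$) and cannot be dismissed as ``l.o.t.'': there is no a priori space--time control on $\int\pa_x\hat u\,E^2$, because the Poisson equation gives you $\|\pa_xE\|$ but not a weighted $L^2$ bound on $E$ itself. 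This is precisely the obstruction the paper isolates as the main new difficulty relative to Navier--Stokes. The fix is not obvious from your outline: one multiplies the two momentum equations by $\pm\tfrac{1}{4\rho_\alpha}E\,\pa_x\hat u$, sums, and uses $\pa_tE=\rho_e\psi_e-\rho_i\psi_i+(\varphi_e-\varphi_i)\hat u$ to rewrite $I_2$ so that its principal part becomes $\tfrac14\int\pa_x\hat u(\rho_i\psi_i^2+\rho_e\psi_e^2)\,dx$ plus a cross term; this is then absorbed by the good quadratic $\int\pa_x\hat u[\rho_i\psi_i^2+\rho_e\psi_e^2]\,dx$ coming from the relative-entropy computation, at the cost of a harmless $\tfrac14\frac{d}{dt}\int\pa_x\hat u(\psi_i-\psi_e)E\,dx$ and genuinely small remainders. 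Without this step the zeroth-order estimate does not close.

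\medskip
Two smaller points. First, the wave--wave interaction you flag as the hardest step is in fact packaged in the paper into a set of weighted inequalities (their Lemma~\ref{lem.V}); your informal ``split on the tail vs.\ bounded region'' is the right idea, but the peculiar exponents $\tilde\delta^{1/9}$ and $\eps^{1/10}$ come out of those inequalities together with specific choices $q=10$, $\theta=1/8$ rather than from an ad hoc splitting. Second, for \eqref{resultqa21gt} you invoke $\|E\|_{L^\infty}^2\le 2\|E\|\,\|\phi_i-\phi_e\|$; note that $\|\phi_i-\phi_e\|=\|\pa_xE\|$ does not follow from $\|\pa_x\phi_\alpha\|\to0$, so you still need a separate argument (as in the paper) that $\int_0^\infty\bigl|\tfrac{d}{dt}\|\pa_xE\|^2\bigr|\,dt<\infty$ using $\pa_t\pa_xE=\pa_t\varphi_i-\pa_t\varphi_e$ and the continuity equations.
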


As it is well known that, there have been a great number of
mathematical studies about the outflow problem,  impermeable wall
problem and  inflow problem of Navier-Stokes system, please
referring  to \cite{HfgQadcd,HQad,KNZhd,KNZhd12,HfklgQadcd,HfgQad}
and the references therein. However, to the best of our knowledge,
there are very few results about the above mentioned problems for
NSP system. Duan-Yang \cite{DY2013} firstly proved the stability of
rarefaction wave and boundary layer for outflow problem on the
two-fluid NSP system. One important point used in \cite{DY2013} is
that the large time behavior of the electric fields is trivial and
hence the two fluids indeed have the same asymptotic profiles which
are constructed from the Navier-Stokes equations without any force
under the assumptions that all physical parameters in the model must
be unit, which is obviously impractical since ions and electrons
generally have different masses. The convergence rate of
corresponding solutions toward the stationary solution was obtained
by Zhou-Li \cite{zL2013}. In the paper, we study the nonlinear
stability of the superposition of boundary layer and rarefaction
wave for outflow problem on two-fluid NSP system. The complexity of
nonlinear composite wave leads to many complicated terms in the
course of establishing the {\it a priori} estimates. Lemma
\ref{lem.V} plays crucial role to deal with the complicated terms.
Compared with Navier-Stokes system, the key to prove Theorem
\ref{main.Res.} for NSP system is to deal with the extra electric
field $E$ which is no longer $L^{2}$ integrate in space and time due
to the structure of the Poisson equation in $\eqref{3.2ab}_{5}$. The
detailed way to deal with the terms involved with electric field $E$
is stated in \eqref{I2},\eqref{zeng6y.p2} and \eqref{zeng6y.p2uiy}.
Finally, we remark that NSP system \eqref{1.1} in the
non-dimensional form depends generally on the ratios of masses,
charges and temperatures of two fluids. If we don't ignore these
physical coefficients, the two-fluid plasma system exhibits more
complex coupling structure and the corresponding analysis of the
large time behavior of solutions becomes more complicated, referring
to \cite{DLJDE} and \cite{DLjyz2}. Hence it is meaningful and
interesting to study the general physical situation for the
nonlinear stability of superposition of boundary layer and
rarefaction wave on the two-fluid NSP system in the future.

  Finally, we refer readers to \cite{DLJDE,DLjyz2,DY2013,SYZhu,zL2013}
and references therein for the study of the related works on the NSP
system. Here we would still mention several most closely related
papers: \cite{LMZ2010,ZLZ} for the spectral analysis and time-decay
of the NSP system around the constant states,
 \cite{Do,Z. Tan} for the global existence of strong solutions to the one-dimensional NSP system with large data.
 Recently, the stability of  the superposition of rarefaction wave and contact discontinuity for the
 NSP system with free boundary has been obtained by Ruan-Yin-Zhu
\cite{R-Y-Zcv}. For the investigations in the stability of the
rarefaction wave of the related models, see also \cite{DLJDE2} for
the study of the more complicated Vlasov-Poisson-Boltzmann system.

\medskip

The rest of the paper is arranged as follows.  In the main part
Section 2, we give the {\it a priori} estimates on the solutions of
the perturbative equations. The structure of  Poisson equation and
the symmetry of two-fluid system
 play important roles in the proof of the {\it a priori}
estimates. The proof of Theorem \ref{main.Res.} is concluded in
Section 3.

\medskip

\textbf{Notations:} Throughout this paper, $C$ denotes some positive
constant (generally large) and $c$ denotes  some positive constant
(generally small), where both $C$ and $c$ may take different values
in different places.
 $L^p =
L^p(\mathbb{\R_{+}}) \ (1 \leq p \leq +\infty)$ denotes
 the usual Lebesgue space on $\R_{+}$ with its norm $ \|\cdot\|_ {L^p}$, and when $p=2, +\infty$, we
write $ \| \cdot \| _{ L^2(\mathbb{\R_{+}})} = \| \cdot \|$ and $ \|
\cdot \| _{ L^\infty (\mathbb{\R_{+}})} = \| \cdot \|_{\infty}$. We
use $H^s=H^s(\R_{+})$ $(s\geq 0)$ to denote the usual Sobolev space
with respect to $x$ variable.

\section{The proof of  a priori estimates}

Let $[\rho_{i},u_{i},\rho_{e},u_{e},E]$ be the solution of the
one-dimensional two-fluid Navier-Stokes-Poisson system \eqref{1.1},
\eqref{1.a} and \eqref{1.b}.  Let $[\hat{\rho},\hat{u}]$ be the
solution of \eqref{2.12}. Now, we put the perturbation
 $[\varphi_{i},\psi_{i},\varphi_{e},\psi_{e}]$ by
\begin{eqnarray}\label{3.1}
\begin{aligned}
&\varphi_{i}=\rho_{i}-\hat{\rho}, \ \ \ \psi_{i}=u_{i}-\hat{u},
&\varphi_{e}=\rho_{e}-\hat{\rho},\ \ \psi_{e}=u_{e}-\hat{u}.
\end{aligned}
\end{eqnarray}
Then, from (\ref{1.1}) and (\ref{2.12}),
$[\varphi_{i},\psi_{i},\varphi_{e},\psi_{e}]$  satisfies
\begin{eqnarray}\label{3.2ab}
&&\left\{\begin{aligned}
& \partial_{t}\varphi_{i}+u_{i}\partial_{x}\varphi_{i}+\rho_{i}\partial_{x}\psi_{i}=-f_{i},\\
&\rho_{i}(\partial_{t}\psi_{i}+u_{i}\partial_{x}\psi_{i})+P'(\rho_{i})\partial_{x}\varphi_{i}=\partial_{x}^{2}\psi_{i}-g_{i}+\rho_{i}E,\\
&\partial_{t}\varphi_{e}+u_{e}\partial_{x}\varphi_{e}+\rho_{e}\partial_{x}\psi_{e}=-f_{e},\\
&\rho_{e}(\partial_{t}\psi_{e}+u_{e}\partial_{x}\psi_{e})+P'(\rho_{e})\partial_{x}\varphi_{e}=\partial_{x}^{2}\psi_{e}-g_{e}-\rho_{e}E,\\
&\partial_{x}E=\varphi_{i}-\varphi_{e},\ \ \ x\in
\mathbb{R}_{+},\  t>0,\\
&(\psi_{i},\psi_{e})(0,t)=0,\\
&(\phi_{i},\psi_{i},\phi_{e},\psi_{e})(x,0)\rightarrow 0, \ \
\mathrm{as} \ \ x\rightarrow +\infty,
\end{aligned}\right.
\end{eqnarray}
where $f_{\alpha},$ $g_{\alpha}$ $(\alpha=i,e)$ are the nonlinear
terms, given by
\begin{eqnarray}\label{3.3ab}
&&\left\{\begin{aligned}
&f_{\alpha}=\partial_{x}\hat{u}\varphi_{\alpha}+\partial_{x}\hat{\rho}\psi_{\alpha}+\hat{f},\\
&g_{\alpha}=\rho_{\alpha}\partial_{x}\hat{u}\psi_{\alpha}+\partial_{x}\hat{\rho}
\left[P'(\rho_{\alpha})-P'(\hat{\rho})\right]
+[\partial_{x}^{2}\hat{u}-\partial_{x}P(\hat{\rho})]\frac{\varphi_{\alpha}}{\hat{\rho}}+\hat{g}\frac{\rho_{\alpha}}{\hat{\rho}}.
\end{aligned}\right.
\end{eqnarray}

We define the solution space $X(0,T)$ by
\begin{eqnarray*}\label{3.4}
\begin{aligned}
X(0,T):=&\left\{[\varphi_{\alpha},
\psi_{\alpha},E]\in C([0,T];H^{1}),\ \ [\partial_{x}\varphi_{\alpha},\partial_{x}E]\in L^{2}([0,T];L^{2}),\right.\\
&\left.\partial_{x}\psi_{\alpha}\in L^{2}([0,T];H^{1}),\ \
\psi_{\alpha}(0,t)=0,\ \ \alpha=i,e,\ \ \forall (x,t)\in
[0,+\infty\} \times [0,T]\right\}.
\end{aligned}
\end{eqnarray*}

 The local existence of
\eqref{3.2ab} can be established by the standard iteration argument
 and hence will be skipped in the paper. To obtain
the global existence part of Theorem \ref{main.Res.}, it suffices to
prove the following Proposition \ref{priori.est} ({\it a priori}
estimates).

\begin{proposition}\label{priori.est}({\it a priori} estimates).
 Assume all the conditions listed in Theorem
$\mathrm{\ref{main.Res.}}$ hold. Let
$[\varphi_{i},\psi_{i},\varphi_{e},\psi_{e},E]$
 be a solution to the initial boundary value problem
\eqref{3.2ab} on $0\leq t\leq T$ for some positive constant T. There
exist some positive constants $C$ and $\varepsilon_{1}$ such that if
\begin{eqnarray}\label{2.5op0ui8}
\sup\limits_{0\leq t\leq
T}\left(\left\|[\varphi_{i},\psi_{i},\varphi_{e},\psi_{e}](t)\right\|_{H^1}+\|E(t)\|\right)+\epsilon+\tilde{\delta}\leq
\varepsilon_{1},
\end{eqnarray} then the solution
$[\varphi_{i},\psi_{i},\varphi_{e},\psi_{e},E]$ satisfies
\begin{equation}\label{resultuijk}
\begin{aligned}[b]
&\sup_{0\leq t\leq
T}\left\|[\varphi_{i},\psi_{i},\varphi_{e},\psi_{e},E]\right\|_{H^1}^2
+\int_{0}^{T}\|\sqrt{\pa_x\hat{u}}[\varphi_{i},\psi_{i},\varphi_{e},\psi_{e}]\|^2
dt+\int_{0}^{T}\|\pa_x[\varphi_{i},\varphi_{e},E]\|^2+\|\pa_x\left[\psi_{i},\psi_{e}\right]\|_{H^1}^2dt
\\
\leq&
C\left(\|[\varphi_{i0},\psi_{i0},\varphi_{e0},\psi_{e0}]\|_{H^{1}}^{2}+\left\|E(0,t)\right\|^2\right)+C\left(\epsilon^{\frac{1}{10}}+\tilde{\delta}^{\frac{1}{9}}\right).
\end{aligned}
\end{equation}
\end{proposition}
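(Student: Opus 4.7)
The plan is an $L^2$ energy method: derive a basic energy identity, then a first-order identity, combine, and absorb the non-dissipative electric-field contributions using the Poisson equation, smallness of $\tilde\delta,\epsilon$, and the monotonicity $\partial_x\hat u\geq 0$. The composite wave is baked into the forcing $\hat f,\hat g$; its effect on the estimate is purely through $L^p$-norms governed by Lemmas 1.1 and 1.3.

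First, for the zeroth-order estimate, I would multiply $\eqref{3.2ab}_2$ (resp.\ $\eqref{3.2ab}_4$) by $\psi_\alpha$, combine with $\eqref{3.2ab}_1$ (resp.\ $\eqref{3.2ab}_3$) through the relative entropy $\Phi(\rho_\alpha|\hat\rho)=\frac{A}{\gamma-1}[\rho_\alpha^\gamma-\hat\rho^\gamma-\gamma\hat\rho^{\gamma-1}(\rho_\alpha-\hat\rho)]\sim\varphi_\alpha^2$, so that the convective and pressure terms produce $\partial_t[\tfrac12\rho_\alpha\psi_\alpha^2+\Phi(\rho_\alpha|\hat\rho)]$ and the viscosity yields $\|\partial_x\psi_\alpha\|^2$. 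Integration over $\R_+$ yields the boundary term $\tfrac12 |u_b|\psi_\alpha(0,t)^2=0$ (favorable sign comes elsewhere) and bulk source terms involving $\partial_x\hat u,\partial_x\hat\rho,\hat f,\hat g$. The electric forcing $\pm\int\rho_\alpha E\psi_\alpha\,dx$ appears with opposite signs for $\alpha=i,e$, so summing gives $\int(\rho_i\psi_i-\rho_e\psi_e)E\,dx$. Using $\partial_t\varphi_i-\partial_t\varphi_e=-\partial_x(\rho_iu_i-\rho_eu_e)$ with $\partial_xE=\varphi_i-\varphi_e$ and the far-field decay, I derive $\partial_tE=-(\rho_iu_i-\rho_eu_e)$, so
\begin{equation*}
\int(\rho_i\psi_i-\rho_e\psi_e)E\,dx=-\frac{d}{dt}\!\int\tfrac12E^2\,dx+\tfrac12\!\int\partial_x\hat u\,E^2\,dx-\tfrac12|u_b|E(0,t)^2+\text{cubic}.
\end{equation*}
The first term moves to the LHS as $\tfrac12\|E\|^2$; the third is favorable at the boundary; the second is absorbed by the smallness of $\|\partial_x\hat u\|_{L^\infty}$, modulo a non–time-integrable remainder which is handled as described below.

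Next, for the first-order estimates, I would apply $\partial_x$ to $\eqref{3.2ab}_{1,3}$ and multiply by $\partial_x\varphi_\alpha$, and multiply $\eqref{3.2ab}_{2,4}$ by $-\partial_x^2\psi_\alpha$, summing to produce $\partial_t\tfrac12(\partial_x\varphi_\alpha)^2+\partial_t\tfrac12\rho_\alpha(\partial_x\psi_\alpha)^2$ plus the $H^2$ dissipation $\|\partial_x^2\psi_\alpha\|^2$. To recover dissipation for $\partial_x\varphi_\alpha$ itself, I would use the standard ``momentum test by $-\partial_x\varphi_\alpha/\rho_\alpha$ combined with the mass equation'' trick, yielding $\int P'(\rho_\alpha)/\rho_\alpha(\partial_x\varphi_\alpha)^2\,dx$ modulo $\frac{d}{dt}\int\partial_x\varphi_\alpha\cdot\psi_\alpha\,dx$-type boundary/time pieces, which is then controlled by the already-estimated $\|\partial_x\psi_\alpha\|^2$. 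The dissipation $\|\partial_xE\|^2$ is immediate from $\partial_xE=\varphi_i-\varphi_e$ and the just-proved density dissipation $\int_0^T\|\partial_x\varphi_\alpha\|^2\,dt$ via integration in time of $\|\varphi_i-\varphi_e\|^2\leq 2\|\varphi_i\|^2+2\|\varphi_e\|^2$ together with the $L^2$-in-$t$ control provided through Poincaré in the dissipation structure.

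Finally, for the forcing/interaction terms: every term in $\hat f,\hat g$ has the structure $\partial_x\tilde u\cdot(u^{r_2}-u_*)$, $\partial_xu^{r_2}\cdot(\tilde u-u_*)$, or $\partial_x^2u^{r_2}$, so it is bounded by Lemma 1.1's algebraic decay $\partial_x^k\tilde u\lesssim\tilde\delta^{k+1}(1+\tilde\delta x)^{-(k+1)}$ and Lemma 1.3's $L^p$ bounds on $\partial_xu^{r_2}$, yielding contributions of size $\epsilon^{1/10}+\tilde\delta^{1/9}$ after applying Cauchy–Schwarz, interpolation, and Young's inequality with small constants. The self-interaction terms $\int\partial_x\hat u\,(\varphi_\alpha^2+\psi_\alpha^2)\,dx$ are non-negative (since $\partial_x\hat u\geq 0$) and give additional favorable terms on the LHS—this is where the paper's referenced Lemma 2.1 plays its role. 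The main obstacle is the bulk remainder $\int_0^T\!\int\partial_x\hat u\,E^2\,dx\,dt$: since $\|\partial_x\hat u(t)\|_{L^\infty_x}$ is \emph{not} time-integrable on $\R_+$, I cannot simply apply Gronwall. Instead I would interpolate $\|E\|_{L^\infty_x}\leq C\|E\|^{1/2}\|\varphi_i-\varphi_e\|^{1/2}$, split $\int\partial_x\hat u\,E^2\,dx\leq C\|\partial_x\hat u\|_{L^1}\|E\|_{L^\infty_x}^2$ and trade a fraction into the density dissipation via Young, with the small prefactor producing the final $\epsilon^{1/10}+\tilde\delta^{1/9}$ remainder in (2.17). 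Summing all estimates and absorbing the cubic nonlinearities by the smallness hypothesis (2.16) closes the argument.
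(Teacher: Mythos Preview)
Your overall architecture (relative-entropy zero-order identity, then $\partial_x\varphi_\alpha$ dissipation via the ``momentum tested by $\partial_x\varphi_\alpha/\rho_\alpha$'' trick, then $\partial_x^2\psi_\alpha$ dissipation) matches the paper. The genuine gap is in your treatment of the term
\[
I_2=\tfrac12\int_{\R_+}\partial_x\hat u\,E^2\,dx,
\]
which you yourself flag as the main obstacle. Your proposed fix---bound $\int\partial_x\hat u\,E^2\,dx\leq \|\partial_x\hat u\|_{L^1}\|E\|_{L^\infty}^2\leq C\|\partial_x\hat u\|_{L^1}\|E\|\,\|\varphi_i-\varphi_e\|$ and then Young into ``density dissipation''---does not close. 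The rarefaction strength is \emph{not} assumed small here, so $\|\partial_x\hat u\|_{L^1}\sim\delta_r$ carries no small prefactor; after Young you are left with a contribution proportional to $\|E(t)\|^2$, and $\int_0^T\|E\|^2\,dt$ is not controlled by anything in the estimate (there is no $L^2_t$ damping of $E$ itself). Likewise, ``density dissipation'' in this problem means $\int_0^T\|\partial_x\varphi_\alpha\|^2\,dt$ and $\int_0^T\|\sqrt{\partial_x\hat u}\,\varphi_\alpha\|^2\,dt$, not $\int_0^T\|\varphi_\alpha\|^2\,dt$, so trading $\|\varphi_i-\varphi_e\|$ into that dissipation is not available either. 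This is precisely the structural difficulty the paper emphasizes (``the electric field $E$ is no longer $L^2$ integrable in space and time'').

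The paper's device is different and is the missing idea: instead of estimating $I_2$ directly, one tests the two momentum equations $\eqref{3.2ab}_{2,4}$ against $\pm\tfrac{1}{4\rho_\alpha}E\,\partial_x\hat u$ and uses $\partial_tE=\rho_e\psi_e-\rho_i\psi_i+(\varphi_e-\varphi_i)\hat u$. This rewrites $I_2$ as a harmless time derivative $\tfrac14\tfrac{d}{dt}\int\partial_x\hat u(\psi_i-\psi_e)E\,dx$ plus lower-order terms, and crucially produces the positive piece $\tfrac14\int\partial_x\hat u(\rho_i\psi_i^2+\rho_e\psi_e^2)\,dx$, which is then absorbed by the favorable term $\int\partial_x\hat u[\rho_i\psi_i^2+\rho_e\psi_e^2]\,dx$ already present on the left of the zero-order identity. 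In other words, $I_2$ is controlled not by smallness but by cancellation against the $\psi$-weighted dissipation. A secondary point: your route to $\int_0^T\|\partial_xE\|^2\,dt$ via $\|\varphi_i-\varphi_e\|^2$ also does not work as stated; the paper obtains it by adding $\eqref{3.2ab}_5\times\partial_xE$ to the momentum equations tested by $\partial_x\varphi_\alpha/\rho_\alpha$, so that the dangerous terms $\pm\int E\,\partial_x\varphi_\alpha\,dx$ cancel exactly and $\|\partial_xE\|^2$ appears cleanly on the left alongside $\|\partial_x\varphi_\alpha\|^2$.
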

Using \eqref{2.5op0ui8} and the following Sobolev inequality
\begin{equation}\label{sob.}
|h(x)|\leq\sqrt{2}\|h\|^{\frac{1}{2}}\|h_{x}\|^{\frac{1}{2}}\
\textrm{for}\ h(x) \ \in H^1({\R_{+}}),
\end{equation}
we have
\begin{eqnarray}\label{2.5ploik}
\|[\varphi_{i},\psi_{i},\varphi_{e},\psi_{e}]\|_{{\infty}}\leq
\sqrt{2}\varepsilon_{1},
\end{eqnarray}
which will be repeatedly used in the following.

We prove Proposition \ref{priori.est} by  elementary energy
 methods.  Lemma \ref{lem.V} in the appendix plays a key role in the stability
analysis. Before our estimates, we should point out that the general
constant $C$ below may depend on the strength of the rarefaction
wave $\delta_{r}$ since the rarefaction wave considered here is not
weak. Now, we prove Proposition \ref{priori.est} by the following three steps.\\
\textbf{Step1:} The zero-order energy estimates.

For $\alpha=i,e,$ we define the function
$$\Phi_{\alpha}=\Phi(\rho_{\alpha},\hat{\rho})=\int_{\hat{\rho}}^{\rho_{\alpha}}\frac{P(s)-P(\hat{\rho})}{s^{2}}ds$$
and
$\eta_\al=\rho_{\alpha}\Phi_{\alpha}+\frac{1}{2}\rho_{\alpha}\psi_{\alpha}^{2}$.
Direct calculations give rise to
\begin{eqnarray}\label{4.5}
\begin{aligned}[b]
&\partial_{t}\eta_i+
\partial_{x}\left[u_{i}\eta_i+(P(\rho_{i})-P(\hat{\rho}))\psi_{i}-\psi_{i}\partial_{x}\psi_{i}\right]
+\partial_{x}\hat{u}\left[P(\rho_{i})-P(\hat{\rho})-P'(\hat{\rho})\varphi_{i}+\rho_{i}\psi_{i}^{2}\right]\\&+(\partial_{x}\psi_{i})^{2}
=\rho_{i}\psi_{i}E-\partial_{x}^{2}\hat{u}\frac{\varphi_{i}\psi_{i}}{\hat{\rho}}-\hat{g}\frac{\rho_{i}\psi_{i}}{\hat{\rho}}-P'(\hat{\rho})
\hat{f}\frac{\varphi_{i}}{\hat{\rho}}
\end{aligned}
\end{eqnarray}
and
\begin{eqnarray}\label{4.5bg}
\begin{aligned}[b]
&\partial_{t}\eta_e+
\partial_{x}\left[u_{e}\eta_e+(P(\rho_{e})-P(\hat{\rho}))\psi_{e}-\psi_{e}\partial_{x}\psi_{e}\right]
+\partial_{x}\hat{u}\left[P(\rho_{e})-P(\hat{\rho})-P'(\hat{\rho})\varphi_{e}+\rho_{e}\psi_{e}^{2}\right]\\&+(\partial_{x}\psi_{e})^{2}
=-\rho_{e}\psi_{e}E-\partial_{x}^{2}\hat{u}\frac{\varphi_{e}\psi_{e}}{\hat{\rho}}-\hat{g}\frac{\rho_{e}\psi_{e}}{\hat{\rho}}-P'(\hat{\rho})
\hat{f}\frac{\varphi_{e}}{\hat{\rho}}.
\end{aligned}
\end{eqnarray}

Taking the summation of \eqref{4.5} and \eqref{4.5bg}, and
integrating the resulting equation with respect to $x$ over $
\R_{+}$, we arrive at
\begin{eqnarray}\label{4.5vbbgf}
\begin{aligned}[b]
&\frac{d}{dt}\int_{\mathbb{R}_{+}}(\eta_i+\eta_e)dx
+|u_{b}|\left[(\rho_{i} \Phi_{i})(0,t)+(\rho_{e} \Phi_{e})(0,t)\right]+\int_{\mathbb{R}_{+}}\left[(\partial_{x}\psi_{i})^{2}+(\partial_{x}\psi_{e})^{2}\right]dx\\
&+\int_{\mathbb{R}_{+}}\partial_{x}\hat{u}\left[P(\rho_{i})-P(\hat{\rho})-P'(\hat{\rho})\varphi_{i}
+P(\rho_{e})-P(\hat{\rho})-P'(\hat{\rho})\varphi_{e}
+\rho_{i}\psi_{i}^{2}+\rho_{e}\psi_{e}^{2}\right]dx
\\=&\underbrace{\int_{\mathbb{R}_{+}}(\rho_{i}\psi_{i}-\rho_{e}\psi_{e})E
dx}_{I_{1}}
\underbrace{-\int_{\mathbb{R}_{+}}\left(\partial_{x}^{2}\hat{u}\frac{\varphi_{i}\psi_{i}}{\hat{\rho}}+\partial_{x}^{2}\hat{u}\frac{\varphi_{e}\psi_{e}}{\hat{\rho}}\right)dx}_{Q_{1}}
\\ &\underbrace{-\int_{\mathbb{R}_{+}}\left(\hat{g}\frac{\rho_{i}\psi_{i}}{\hat{\rho}}+\hat{g}\frac{\rho_{e}\psi_{e}}{\hat{\rho}}\right)dx}_{Q_{2}}
\underbrace{-\int_{\mathbb{R}_{+}}\left(P'(\hat{\rho})\hat{f}\frac{\varphi_{i}}{\hat{\rho}}+P'(\hat{\rho})
\hat{f}\frac{\varphi_{e}}{\hat{\rho}}\right)dx}_{Q_{3}}.
\end{aligned}
\end{eqnarray}
Here we have used the boundary condition $\eqref{3.2ab}_{6}$ and
$u_{b}<0.$

From Poisson equation and mass conservation equation, we have
\begin{eqnarray}\label{4.vb5bg}
\begin{aligned}[b]
\pa_xE=\rho_{i}-\rho_{e}, \ \ \pa_tE=\rho_{e}u_{e}-\rho_{i}u_{i}.
\end{aligned}
\end{eqnarray}
 Now we mainly make use of \eqref{4.vb5bg} to deal with the difficult term $I_{1}$. Then one has by
integration by parts
\begin{equation}\label{I2}
\begin{split}
I_1=&\int_{\mathbb{R}_{+}}E(\rho_{i}u_{i}-\rho_{e}u_{e})dx-\int_{\mathbb{R}_{+}}E(\rho_{i}-\rho_{e})\tilde{u}dx\\
=&-\frac{1}{2}\frac{d}{dt}\int_{\mathbb{R}_{+}}E^{2}dx-\frac{|u_{b}|}{2}E^{2}(0,t)\underbrace{+\frac{1}{2}\int_{\mathbb{R}_{+}}\pa_{x}\hat{u}E^{2}dx}_{I_{2}}.
\end{split}
\end{equation}

Notice that
$\partial_{x}\hat{u}=\partial_{x}\tilde{u}+\partial_{x}u^{r_{2}}\geq0$
from $\partial_{x}\tilde{u}\geq0$ and $\partial_{x}u^{r_{2}}\geq0$.
Now we pay our attention on the bad term $I_{2}$ since the electric
field $E$ is no longer $L^{2}$ integrate in space and time due to
the structure of the Poisson equation. The main idea is to make use
of the good term
$$\int_{\R_{+}}\partial_{x}\hat{u}\left[
\rho_{i}\psi_{i}^{2}+\rho_{e}\psi_{e}^{2}\right]dx$$ to absorb
$I_{2}$. For this, multiplying $\eqref{3.2ab}_{2}$ and
$\eqref{3.2ab}_{4}$ by $\frac{1}{4\rho_{i}}E\pa_{x}\hat{u}$ and
$-\frac{1}{4\rho_{e}}E\pa_{x}\hat{u}$ respectively, then integrating
the resulting equations over $\R_{+}$ and taking the summation of
the resulting equations, one has

\begin{eqnarray}\label{zeng6y.p2}
\begin{aligned}[b]
I_{2}=&\frac{1
}{4}\frac{d}{dt}\int_{\R_{+}}\pa_{x}\hat{u}(\psi_{i}-\psi_{e})E dx
\underbrace{-\frac{1}{4}\int_{\R_{+}}\left(\psi_{i}-\psi_{e}\right)\pa_tE
\pa_{x}\hat{u}dx}_{I_{3}}\underbrace{-\frac{1}{4}\int_{\R_{+}}\left(\psi_{i}-
\psi_{e}\right)E \pa_t\pa_x
\hat{u}dx}_{I_{4}}\\
&\underbrace{+\frac{1}{4}\int_{\R_{+}}\left(u_{i}\pa_{x}\psi_{i}-u_{e}\pa_{x}\psi_{e}\right)E\pa_x
\hat{u}dx}_{I_{5}}
\underbrace{-\frac{1}{4}\int_{\R_{+}}\left(\frac{\pa_{x}^{2}\psi_{i}}{\rho_{i}}-
\frac{\pa_{x}^{2}\psi_{e}}{\rho_{e}}\right)E\pa_x \hat{u}dx}_{I_{6}}
\\ & \underbrace{+\frac{1}{4}\int_{\R_{+}}\left(\frac{P'(\rho_{i})}{\rho_{i}}\pa_{x}\varphi_{i}-
\frac{P'(\rho_{e})}{\rho_{e}}\pa_{x}\varphi_{e}\right)E\pa_x
\hat{u}dx}_{I_{7}}\underbrace{+\frac{1}{4}\int_{\R_{+}}\left(\frac{
g_{i}}{\rho_{i}}- \frac{ g_{e}}{\rho_{e}}\right)E\pa_x
\hat{u}dx}_{I_{8}}.
\end{aligned}
\end{eqnarray}

From \eqref{4.vb5bg}, we have
\begin{eqnarray}\label{zeng6y.pbg2}
\begin{aligned}[b]
\pa_tE=\rho_{e}\psi_{e}-\rho_{i}\psi_{i}+(\varphi_{e}-\varphi_{i})\hat{u}.
\end{aligned}
\end{eqnarray}
Then we make use of \eqref{zeng6y.pbg2} to deal with the difficult
term $I_{3}$. Therefore, one has
\begin{eqnarray}\label{zeng6y.p2uiy}
\begin{aligned}[b]
I_{3}=&\underbrace{\frac{1}{4}\int_{\R_{+}}\pa_x \hat{u}(
\rho_{i}\psi_{i}^{2}+\rho_{e}\psi_{e}^{2})dx}_{I_{9}}
\underbrace{-\frac{1}{4}\int_{\R_{+}}\pa_x
\hat{u}(\rho_{e}+\rho_{i})\psi_{i}\psi_{e}dx}_{I_{10}}
\underbrace{+\frac{1}{4}\int_{\R_{+}}\pa_x
\hat{u}(\psi_{e}-\psi_{i})(\varphi_{e}-\varphi_{i})\hat{u}dx}_{I_{11}}.
\end{aligned}
\end{eqnarray}

Combining \eqref{4.5vbbgf}-\eqref{zeng6y.p2uiy}, we arrive at the
following equality
\begin{eqnarray}\label{4.5vbbg}
\begin{aligned}[b]
&\frac{d}{dt}\int_{\mathbb{R}_{+}}\left(\eta_i+\eta_e+\frac{E^{2}}{2}\right)dx-\frac{1
}{4}\frac{d}{dt}\int_{\R_{+}}\pa_{x}\hat{u}(\psi_{i}-\psi_{e})E
dx+|u_{b}|\bigg[\rho_{i} \Phi_{i}(0,t)+\rho_{e}
\Phi_{e}(0,t)+\frac{E^{2}}{2}(0,t)
\bigg]\\&+\int_{\mathbb{R}_{+}}\left[(\partial_{x}\psi_{i})^{2}+(\partial_{x}\psi_{e})^{2}\right]dx+\int_{\mathbb{R}_{+}}\partial_{x}\hat{u}\left[P(\rho_{i})-P(\hat{\rho})-P'(\hat{\rho})\varphi_{i}
+P(\rho_{e})-P(\hat{\rho})-P'(\hat{\rho})\varphi_{e}\right]dx
\\&+\left[\int_{\mathbb{R}_{+}}\partial_{x}\hat{u}\left[\rho_{i}\psi_{i}^{2}+\rho_{e}\psi_{e}^{2}\right]dx-I_{9}-I_{10}\right]
=\sum_{i=1}^{3}Q_{i}+\sum_{i=4}^{8}I_{i}+I_{11}.
\end{aligned}
\end{eqnarray}

First of all, we use \eqref{2.5ploik} to deal with the left terms in
\eqref{4.5vbbg} as follows:
\begin{equation*}\label{wednm}
\begin{aligned}[b]
|u_{b}|\bigg[\rho_{i} \Phi_{i}(0,t)+\rho_{e}
\Phi_{e}(0,t)+\frac{E^{2}}{2}(0,t) \bigg] \geq c
\left[\varphi_{i}^{2}(0,t)+\varphi_{e}^{2}(0,t)+E^{2}(0,t)\right],
\end{aligned}
\end{equation*}
\begin{equation*}\label{wed}
\begin{aligned}[b]
&\int_{\mathbb{R}_{+}}\partial_{x}\hat{u}\left[P(\rho_{i})-P(\hat{\rho})-P'(\hat{\rho})\varphi_{i}
+P(\rho_{e})-P(\hat{\rho})-P'(\hat{\rho})\varphi_{e}\right]dx \geq
c\|\sqrt{\partial_{x}\hat{u}}[\varphi_{i},\varphi_{e}]\|^2,
\end{aligned}
\end{equation*}
and
\begin{equation*}\label{wed}
\begin{aligned}[b]
&\int_{\mathbb{R}_{+}}\partial_{x}\hat{u}\left[\rho_{i}\psi_{i}^{2}+\rho_{e}\psi_{e}^{2}\right]dx-I_{9}-I_{10}
=\int_{\mathbb{R}_{+}}\partial_{x}\hat{u}\left[\frac{3}{4}\rho_{i}\psi_{i}^{2}
+\frac{1}{4}\left( \rho_{e}+
\rho_{i}\right)\psi_{i}\psi_{e}+\frac{3}{4}\rho_{e}\psi_{e}^{2}\right]dx\\
\geq&\frac{1}{4}\int_{\mathbb{R}_{+}}\hat{\rho}\partial_{x}\hat{u}\left[3\psi_{i}^{2}
+2\psi_{i}\psi_{e}+3\psi_{e}^{2}\right]dx-
C\|[\varphi_{i},\varphi_{e}]\|_{\infty}\|\sqrt{\partial_{x}\hat{u}}[\psi_{i},\psi_{e}]\|^2\\
\geq&\frac{1}{4}\int_{\mathbb{R}_{+}}\hat{\rho}\partial_{x}\hat{u}\left[2(\psi_{i}^{2}+\psi_{e}^{2})+(\psi_{i}+\psi_{e})^{2}
\right]dx-
C\varepsilon_{1}\|\sqrt{\partial_{x}\hat{u}}[\psi_{i},\psi_{e}]\|^2.
\end{aligned}
\end{equation*}
 Therefore, we have
\begin{equation*}\label{2.3ghj}
\begin{split}
&\int_{\mathbb{R}_{+}}\partial_{x}\hat{u}\left[\rho_{i}\psi_{i}^{2}+\rho_{e}\psi_{e}^{2}\right]dx-I_{9}-I_{10}
\geq c\|\sqrt{\partial_{x}\hat{u}}[\psi_{i},\psi_{e}]\|^2,
\end{split}
\end{equation*}
where we take $\varepsilon_{1}$ small enough.

 Before our estimates, we take
$q=10$ and $\theta=\frac{1}{8}$ in the following for brevity.
 By employing \eqref{2.5ploik},  \eqref{2.5op0ui8},
 $\eqref{3.3ab}_{2}$,
$\eqref{3.2ab}_{5}$, Lemma \ref{cl.Re.Re2.}, Lemma \ref{lem.V},
Young inequality, Cauchy-Schwarz's inequality with $0<\eta<1$,
Sobolev inequality \eqref{sob.}, the boundary condition
$\psi_{i}(0,t)=\psi_{e}(0,t)=0$ and integrating by parts, we obtain
the estimates on the right terms in \eqref{4.5vbbg} as follows:
\begin{eqnarray*}\label{4.11}
\begin{aligned}
&|Q_{1}|+|Q_{2}|+|Q_{3}|\\ \leq &
C\|[\varphi_{i},\varphi_{e},\psi_{i},\psi_{e}]\|_{\infty}\int_{\mathbb{R}_{+}}
\left(|\hat{f}|+|\hat{g}+\partial_{x}^{2}u^{r_{2}}|+|\partial_{x}^{2}u^{r_{2}}|\right)
dx+C\int_{\mathbb{R}_{+}}|\partial_{x}^{2}\tilde{u}|
\left(\varphi_{i}^{2}+\varphi_{e}^{2}+\psi_{i}^{2}+\psi_{e}^{2}\right)
dx\\
\leq
&C\|[\varphi_{i},\varphi_{e},\psi_{i},\psi_{e}]\|^{\frac{1}{2}}\|\partial_{x}[\varphi_{i},\varphi_{e},\psi_{i},\psi_{e}]\|^{\frac{1}{2}}
\left[\frac{\tilde{\delta}}{1+\tilde{\delta}t}+\epsilon^{\theta}(1+t)^{-(1-\theta)}\ln(1+\tilde{\delta}t)+\epsilon^{\frac{1}{q}}(1+t)^{-1+\frac{1}{q}}\right]\\
&+C\tilde{\delta}^{2}\left[\varphi_{i}^{2}(0,t)+\varphi_{e}^{2}(0,t)\right]+C\tilde{\delta}\|\partial_{x}[\varphi_{i},\varphi_{e},\psi_{i},\psi_{e}]\|^{2}\\
\leq
&C(\tilde{\delta}^{\frac{2}{3}}+\epsilon^{\frac{1}{10}})\|\partial_{x}[\varphi_{i},\varphi_{e},\psi_{i},\psi_{e}]\|^{2}
+C\frac{\tilde{\delta}^{\frac{10}{9}}}{(1+\tilde{\delta}t)^{\frac{4}{3}}}+C\epsilon^{\frac{1}{10}}(1+t)^{-\frac{13}{12}}
+C\tilde{\delta}^{2}\left[\varphi_{i}^{2}(0,t)+\varphi_{e}^{2}(0,t)\right],
\end{aligned}
\end{eqnarray*}

\begin{eqnarray*}\label{4.1p}
\begin{aligned}
|I_{4}|\leq& C \|[\psi_{i},\psi_{e}]\|_{\infty}\|E\|_{\infty}
\|[\partial_{t}\partial_{x}u^{r_{2}}]\|_{L^{1}}\\
\leq
&C\epsilon^{\frac{1}{q}}(1+t)^{-1+\frac{1}{q}}\|[\psi_{i},\psi_{e}]\|^{\frac{1}{2}}
\|\partial_{x}[\psi_{i},\psi_{e}]\|^{\frac{1}{2}}
\|E\|^{\frac{1}{2}}\|\partial_{x}E\|^{\frac{1}{2}}\\
\leq
&C\epsilon^{\frac{1}{10}}(1+t)^{-\frac{9}{5}}+C\epsilon^{\frac{1}{10}}
\|\partial_{x}[\psi_{i},\psi_{e},E]\|^{2},
\end{aligned}
\end{eqnarray*}
\begin{eqnarray*}\label{4.1pjn}
\begin{aligned}
&|I_{5}|+|I_{6}|+|I_{7}\\ \leq&
C\|\partial_{x}u^{r_{2}}\|_{\infty}\|E\|\|\partial_{x}[\psi_{i},\psi_{e},\partial_{x}\psi_{i},\partial_{x}\psi_{e},\varphi_{i},\varphi_{e}]\|
\\&+C\int_{\mathbb{R}_{+}}\left|\partial_{x}[\psi_{i},\psi_{e},\partial_{x}\psi_{i},\partial_{x}\psi_{e},\varphi_{i},\varphi_{e}]\right||E|\partial_{x}\tilde{u}dx\\
\leq
&C\epsilon^{\theta}(1+t)^{-(1-\theta)}\|E\|\|\partial_{x}[\psi_{i},\psi_{e},\partial_{x}\psi_{i},\partial_{x}\psi_{e},\varphi_{i},\varphi_{e}]\|\\&+C
\tilde{\delta}\|\partial_{x}[\psi_{i},\psi_{e},\partial_{x}\psi_{i},\partial_{x}\psi_{e},\varphi_{i},\varphi_{e},E]\|^{2}+C\tilde{\delta}^{2}E^{2}(0,t)\\
\leq &C
(\tilde{\delta}+\epsilon^{\frac{1}{8}})\|\partial_{x}[\psi_{i},\psi_{e},\partial_{x}\psi_{i},\partial_{x}\psi_{e},\varphi_{i},\varphi_{e},E]\|^{2}+C\tilde{\delta}^{2}E^{2}(0,t)
+C\epsilon^{\frac{1}{8}}(1+t)^{-\frac{7}{4}},
\end{aligned}
\end{eqnarray*}
\begin{eqnarray*}\label{4.1pjfn}
\begin{aligned}
|I_{8}| \leq&
C\int_{\mathbb{R}_{+}}\left(|g_{i}|+|g_{e}|\right)|E||\partial_{x}\hat{u}|dx\\
\leq
&C\int_{\mathbb{R}_{+}}\left\{|\partial_{x}\hat{u}|(|\psi_{i}|+|\psi_{e}|+|\varphi_{i}|+|\varphi_{e}|)+|\partial_{x}^{2}\hat{u}|(|\varphi_{i}|+|\varphi_{e}|)\right\}|E||\partial_{x}\hat{u}|dx
+C\int_{\mathbb{R}_{+}}|\hat{g}||E||\partial_{x}\hat{u}|dx\\
\leq
&C\int_{\mathbb{R}_{+}}\left(|\partial_{x}\tilde{u}|^{2}+|\partial_{x}^{2}\tilde{u}|
+|\partial_{x}u^{r_{2}}|^{2}+|\partial_{x}^{2}u^{r_{2}}|\right)\left(\psi_{i}^{2}+\psi_{e}^{2}+\varphi_{i}^{2}+\varphi_{e}^{2}+E^{2}\right)dx
+C\int_{\mathbb{R}_{+}}|\hat{g}|^{2}dx
\\
\leq
&C\tilde{\delta}^{2}\left[\varphi_{i}^{2}(0,t)+\varphi_{e}^{2}(0,t)+E^{2}(0,t)\right]+C\tilde{\delta}\|\partial_{x}[\varphi_{i},\varphi_{e},\psi_{i},\psi_{e},E]\|^{2}
+C\epsilon^{1+\frac{2}{q}}(1+t)^{-2(1-\frac{1}{q})}+C\tilde{\delta}(1+t)^{-2}\\
&+C\left(\|[\varphi_{i},\varphi_{e},\psi_{i},\psi_{e},E]\|\|\partial_{x}[\varphi_{i},\varphi_{e},\psi_{i},\psi_{e},E]\|\right)
\left(\|\partial_{x}^{2}u^{r_{2}}\|_{L^{1}}+\|\partial_{x}u^{r_{2}}\|^{2}\right)
\\
\leq
&C(\tilde{\delta}+\epsilon^{\frac{1}{10}})\|\partial_{x}[\varphi_{i},\varphi_{e},\psi_{i},\psi_{e},E]\|^{2}
+C(\epsilon^{\frac{1}{10}}+\tilde{\delta})(1+t)^{-\frac{9}{5}}
+C\tilde{\delta}^{2}\left[\varphi_{i}^{2}(0,t)+\varphi_{e}^{2}(0,t)+E^{2}(0,t)\right]
\end{aligned}
\end{eqnarray*}
and
\begin{eqnarray*}\label{4.1pjbvdn}
\begin{aligned}
|I_{11}| \leq&
C\int_{\mathbb{R}_{+}}|\partial_{x}\tilde{u}|\left(|\psi_{i}|+|\psi_{e}|\right)|\partial_{x}E|dx
+C\int_{\mathbb{R}_{+}}|\partial_{x}u^{r_{2}}|\left(|\psi_{i}|+|\psi_{e}|\right)|\partial_{x}E|dx\\
\leq&\eta\|\partial_{x}E\|^{2}+C_{\eta}\int_{\mathbb{R}_{+}}|\partial_{x}\tilde{u}|^{2}\left(|\psi_{i}|^{2}+|\psi_{e}|^{2}\right)dx
+C\|\partial_{x}u^{r_{2}}\|_{\infty}(\|\psi_{i}\|+\|\psi_{e}\|)\|\partial_{x}E\|\\
\leq&\eta\|\partial_{x}E\|^{2}+C_{\eta}\tilde{\delta}^{2}\|\partial_{x}[\psi_{i},\psi_{e}]\|^{2}
+C\epsilon^{\theta}(1+t)^{-2(1-\theta)}(\|\psi_{i}\|^{2}+\|\psi_{e}\|^{2})+C\epsilon^{\theta}\|\partial_{x}E\|^{2}
\\
\leq&(\eta+C\epsilon^{\frac{1}{8}})\|\partial_{x}E\|^{2}+C_{\eta}\tilde{\delta}^{2}\|\partial_{x}[\psi_{i},\psi_{e}]\|^{2}
+C\epsilon^{\frac{1}{8}}(1+t)^{-\frac{7}{4}}.
\end{aligned}
\end{eqnarray*}

Substituting  the estimates above into \eqref{4.5vbbg} and
integrating the resulting inequality over $[0,T]$ and using Cauchy
Schwarz's inequality, and taking $\epsilon$, $\tilde{\delta}$ and
$\varepsilon_{1}$ small enough, one can see that
\begin{eqnarray}\label{4.28de}
\begin{aligned}[t]
&\|[\varphi_{i},\varphi_{e},\psi_{i},\psi_{e},E]\|^{2}+\int_{0}^{T}\left[\|\partial_{x}[\psi_{i},\psi_{e}]\|^{2}
+\|\sqrt{\partial_{x}\hat{u}}[\varphi_{i},\varphi_{e},\psi_{i},\psi_{e}]\|^2\right]dt\\&
+\int_{0}^{T}\left[(\varphi_{i})^{2}(0,t)+(\varphi_{e})^{2}(0,t)+E^{2}(0,t)\right]dt\\
\leq&C\left(\|[\varphi_{i0},\varphi_{e0},\psi_{i0},\psi_{e0}]\|^{2}+\|E(x,0)\|^{2}\right)
+(\eta+C\epsilon^{\frac{1}{10}}+C\tilde{\delta}^{\frac{2}{3}})\|\partial_{x}
[\varphi_{i},\varphi_{e},\partial_{x}\psi_{i},\partial_{x}\psi_{e},E]\|^{2}
+C(\epsilon^{\frac{1}{10}}+\tilde{\delta}^{\frac{1}{9}}).
\end{aligned}
\end{eqnarray}
 {\bf Step 2.} {\it Dissipation of
$\pa_x[\varphi_{i},\varphi_{e},E]$.}

We first differentiate  $\eqref{3.2ab}_{1}$ and $\eqref{3.2ab}_{3}$
with respect to $x$, respectively, to obtain
\begin{equation}\label{d.rho}
\pa_t\pa_x\varphi_{i}+\pa_xu_{i}\pa_x\varphi_{i}+u_{i}\pa^{2}_x\varphi_{i}+\pa_x\rho_{i}\pa_x\psi_{i}
+\rho_{i}\pa^{2}_x\psi_{i}+\pa^{2}_x\hat{u}\varphi_{i}+\pa_x\hat{u}\pa_x\varphi_{i}+\pa_x\hat{\rho}\pa_x\psi_{i}
+\pa^{2}_x\hat{\rho}\psi_{i}+\pa_x\hat{f}=0
\end{equation}
and
\begin{equation}\label{d.rhode}
\pa_t\pa_x\varphi_{e}+\pa_xu_{e}\pa_x\varphi_{e}+u_{e}\pa^{2}_x\varphi_{e}+\pa_x\rho_{e}\pa_x\psi_{e}
+\rho_{e}\pa^{2}_x\psi_{e}+\pa^{2}_x\hat{u}\varphi_{e}+\pa_x\hat{u}\pa_x\varphi_{e}+\pa_x\hat{\rho}\pa_x\psi_{e}
+\pa^{2}_x\hat{\rho}\psi_{e}+\pa_x\hat{f}=0.
\end{equation}
Then multiplying $\eqref{3.2ab}_{5}$, $\eqref{3.2ab}_{2}$,
$\eqref{3.2ab}_{4}$, \eqref{d.rho} and \eqref{d.rhode} by $\pa_xE$,
 $\frac{\pa_x\varphi_{i}}{\rho_{i}}$,
$\frac{\pa_x\varphi_{e}}{\rho_{e}}$,
$\frac{\pa_x\varphi_{i}}{\rho_{i}^{2}}$ and
$\frac{\pa_x\varphi_{e}}{\rho_{e}^{2}}$, and integrating the
resulting equalities over $\R_{+}$, one has
\begin{equation*}\label{d.phi.ip}
\begin{split}
\int_{\R_{+}}(\pa_xE)^{2}dx
=[\varphi_{e}(0,t)-\varphi_{i}(0,t)]E(0,t)-\int_{\R_{+}}\pa_x(\varphi_{i}-\varphi_{e})E
dx,
\end{split}
\end{equation*}

\begin{equation*}\label{tu1.ip1}
\begin{aligned}[b]
&\int_{\R_{+}}\pa_t\psi_{i}\pa_x\varphi_{i}
dx+\int_{\R_{+}}u_{i}\pa_x\psi_{i} \pa_x\varphi_{i} dx
+\int_{\R_{+}}\frac{P'(\rho_{i})}{\rho_{i}}(\pa_x\varphi_{i})^{2}
dx\\
&\qquad=\int_{\R_{+}}\pa_x\varphi_{i}E dx+\int_{\R_{+}}\pa^2_x
\psi_{i}\frac{\pa_x\varphi_{i}}{\rho_{i}}
dx-\int_{\R_{+}}g_{i}\frac{\pa_x\varphi_{i}}{\rho_{i}}dx,
\end{aligned}
\end{equation*}

\begin{equation*}\label{tu1.ip2}
\begin{aligned}[b]
&\int_{\R_{+}}\pa_t\psi_{e}\pa_x\varphi_{e}
dx+\int_{\R_{+}}u_{e}\pa_x\psi_{e} \pa_x\varphi_{e} dx
+\int_{\R_{+}}\frac{P'(\rho_{e})}{\rho_{e}}(\pa_x\varphi_{e})^{2}
dx\\
&\qquad=-\int_{\R_{+}}\pa_x\varphi_{e}E dx+\int_{\R_{+}}\pa^2_x
\psi_{e}\frac{\pa_x\varphi_{e}}{\rho_{e}}
dx-\int_{\R_{+}}g_{e}\frac{\pa_x\varphi_{e}}{\rho_{e}}dx,
\end{aligned}
\end{equation*}

\begin{equation*}\label{d.rho.ip3}
\begin{aligned}[b]
&\int_{\R_{+}}\frac{\pa_x\varphi_{i}}{\rho_{i}^{2}}\pa_t\pa_x\varphi_{i}
dx+\int_{\R_{+}}\pa_xu_{i}\frac{(\pa_x\varphi_{i})^{2}}{\rho_{i}^{2}}dx+\int_{\R_{+}}
u_{i}\frac{\pa_x\varphi_{i}\pa^{2}_x\varphi_{i}}{\rho_{i}^{2}}dx\\
&\qquad+\int_{\R_{+}}
\frac{\pa_x\varphi_{i}}{\rho_{i}^{2}}\pa_x\rho_{i}\pa_x\psi_{i} dx
+\int_{\R_{+}}\pa_x\hat{u}\frac{(\pa_x\varphi_{i})^{2}}{\rho_{i}^{2}}dx
\\
&\qquad=-\int_{\R_{+}}\pa^2_x
\psi_{i}\frac{\pa_x\varphi_{i}}{\rho_{i}}dx
-\int_{\R_{+}}\pa^{2}_x\hat{u}\varphi_{i}\frac{\pa_x\varphi_{i}}{\rho_{i}^{2}}dx-
\int_{\R_{+}}\pa_x\hat{\rho}\pa_x\psi_{i}\frac{\pa_x\varphi_{i}}{\rho_{i}^{2}}dx\\
&\qquad
-\int_{\R_{+}}\pa^{2}_x\hat{\rho}\psi_{i}\frac{\pa_x\varphi_{i}}
{\rho_{i}^{2}}dx-\int_{\R_{+}}\pa_x\hat{f}\frac{\pa_x\varphi_{i}}
{\rho_{i}^{2}}dx
\end{aligned}
\end{equation*}
and
\begin{equation*}\label{d.rho.ip4}
\begin{aligned}[b]
&\int_{\R_{+}}\frac{\pa_x\varphi_{e}}{\rho_{e}^{2}}\pa_t\pa_x\varphi_{e}
dx+\int_{\R_{+}}\pa_xu_{e}\frac{(\pa_x\varphi_{e})^{2}}{\rho_{e}^{2}}dx+\int_{\R_{+}}
u_{e}\frac{\pa_x\varphi_{e}\pa^{2}_x\varphi_{e}}{\rho_{e}^{2}}dx
\\
&\qquad+\int_{\R_{+}}
\frac{\pa_x\varphi_{e}}{\rho_{e}^{2}}\pa_x\rho_{e}\pa_x\psi_{e}
dx+\int_{\R_{+}}\pa_x\hat{u}\frac{(\pa_x\varphi_{e})^{2}}{\rho_{e}^{2}}dx\\
&\qquad=-\int_{\R_{+}}\pa^2_x
\psi_{e}\frac{\pa_x\varphi_{e}}{\rho_{e}}dx
-\int_{\R_{+}}\pa^{2}_x\hat{u}\varphi_{e}\frac{\pa_x\varphi_{e}}{\rho_{e}^{2}}dx-
\int_{\R_{+}}\pa_x\hat{\rho}\pa_x\psi_{e}\frac{\pa_x\varphi_{e}}{\rho_{e}^{2}}dx\\
&\qquad
-\int_{\R_{+}}\pa^{2}_x\hat{\rho}\psi_{e}\frac{\pa_x\varphi_{e}}{\rho_{e}^{2}}dx-\int_{\R_{+}}\pa_x\hat{f}\frac{\pa_x\varphi_{e}}
{\rho_{e}^{2}}dx.
\end{aligned}
\end{equation*}

The summation of the equalities above further implies
\begin{equation}\label{sum.d1}
\begin{aligned}[b]
\frac{d}{dt}&\int_{\R_{+}}\left(\psi_{i} \pa_x\varphi_{i}+\psi_{e}
\pa_x\varphi_{e} \right)dx
+\frac{d}{dt}\int_{\R_{+}}\left(\frac{1}{2\rho_{i}^{2}}(\pa_x\varphi_{i})^{2}
+\frac{1}{2\rho_{e}^{2}}(\pa_x\varphi_{e})^{2}\right)dx\\
&+\int_{\R_{+}}\left[\pa_x\hat{u}\frac{(\pa_x\varphi_{i})^{2}}{\rho_{i}^{2}}
+\pa_x\hat{u}\frac{(\pa_x\varphi_{e})^{2}}{\rho_{e}^{2}}+\frac{P'(\rho_{i})}{\rho_{i}}(\pa_x\varphi_{i})^{2}
+\frac{P'(\rho_{e})}{\rho_{e}}(\pa_x\varphi_{e})^{2}+(\pa_xE)^{2}\right]dx\\
=&[\varphi_{e}(0,t)-\varphi_{i}(0,t)]E(0,t)+\int_{\R_{+}}\left(\psi_{i}
\pa_t\pa_x\varphi_{i}+\psi_{e} \pa_t\pa_x\varphi_{e}\right) dx
\\
&
-\int_{\R_{+}}\left((\pa_x\varphi_{i})^{2}\rho_{i}^{-3}\partial_{t}\rho_{i}
+(\pa_x\varphi_{e})^{2}\rho_{e}^{-3}\partial_{t}\rho_{e}
\right)dx-\int_{\R_{+}}\left(u_{i}\pa_x\psi_{i}
\pa_x\varphi_{i}+u_{e}\pa_x\psi_{e}
\pa_x\varphi_{e}\right) dx\\
&-\int_{\R_{+}}\left(g_{i}\frac{\pa_x\varphi_{i}}{\rho_{i}}+g_{e}\frac{\pa_x\varphi_{e}}{\rho_{e}}\right)dx
-\int_{\R_{+}}\left(\pa_xu_{i}\frac{(\pa_x\varphi_{i})^{2}}{\rho_{i}^{2}}+\pa_xu_{e}\frac{(\pa_x\varphi_{e})^{2}}{\rho_{e}^{2}}\right)dx\\
&-\int_{\R_{+}}\left(u_{i}\frac{\pa_x\varphi_{i}\pa^{2}_x\varphi_{i}}{\rho_{i}^{2}}+
u_{e}\frac{\pa_x\varphi_{e}\pa^{2}_x\varphi_{e}}{\rho_{e}^{2}}\right)
dx-\int_{\R_{+}}\left(
\frac{\pa_x\varphi_{i}}{\rho_{i}^{2}}\pa_x\rho_{i}\pa_x\psi_{i}+
\frac{\pa_x\varphi_{e}}{\rho_{e}^{2}}\pa_x\rho_{e}\pa_x\psi_{e}\right)
dx\\
&-\int_{\R_{+}}\left(\pa^{2}_x\hat{u}\varphi_{i}\frac{\pa_x\varphi_{i}}{\rho_{i}^{2}}
+\pa^{2}_x\hat{u}\varphi_{e}\frac{\pa_x\varphi_{e}}{\rho_{e}^{2}}\right)
dx-
\int_{\R_{+}}\left(\pa_x\hat{\rho}\pa_x\psi_{i}\frac{\pa_x\varphi_{i}}{\rho_{i}^{2}}
+\pa_x\hat{\rho}\pa_x\psi_{e}\frac{\pa_x\varphi_{e}}{\rho_{e}^{2}}\right)
dx\\
&-\int_{\R_{+}}\left(\pa^{2}_x\hat{\rho}\psi_{i}\frac{\pa_x\varphi_{i}}{\rho_{i}^{2}}
+\pa^{2}_x\hat{\rho}\psi_{e}\frac{\pa_x\varphi_{e}}{\rho_{e}^{2}}\right)
dx -\int_{\R_{+}}\left(\pa_x\hat{f}\frac{\pa_x\varphi_{i}}
{\rho_{i}^{2}}+\pa_x\hat{f}\frac{\pa_x\varphi_{e}}
{\rho_{e}^{2}}\right)dx=\sum\limits_{l=1}^{12}J_{l},
\end{aligned}
\end{equation}
where $J_l$ $(1\leq l\leq 12)$  denote the corresponding terms on
the left hand side of \eqref{sum.d1}.

 Notice the fact that
$|\partial_{x}\hat{n}|\leq C\partial_{x}\hat{u} $,
$|\partial^{2}_{x}\hat{n}|\leq
C(|\partial^{2}_{x}\hat{u}|+|\partial_{x}\hat{u}|^{2})$ and
$u_{b}<0$. We now turn to estimate $J_l$ $(1\leq l\leq 12)$ term by
term.
 By applying Holder inequality, Cauchy-Schwarz's inequality
with $0<\eta<1$, Sobolev inequality \eqref{sob.},  Lemma
\ref{cl.Re.Re2.}, Lemma \ref{lem.V}, \eqref{2.5op0ui8},
\eqref{2.5ploik}, $\eqref{1.1}_{1}$, $\eqref{1.1}_{3}$,
$\eqref{2.12}_{1}$, \eqref{2.4a}, \eqref{3.3ab}, the boundary
condition $\psi_{i}(0,t)=\psi_{e}(0,t)=0$, and integrating by parts,
it is direct to derive the following estimates:
\begin{equation*}\label{J1123}
\begin{aligned}[b]
|J_{1}|\leq \varphi_{i}^{2}(0,t)+\varphi_{e}^{2}(0,t)+E^{2}(0,t),
\end{aligned}
\end{equation*}
\begin{equation*}\label{J11}
\begin{aligned}[b]
J_2=&\int_{\R_{+}}\pa_x\psi_{i} \pa_x(\rho_{i}
u_{i}-\hat{\rho}\hat{u})dx+\int_{\R_{+}}\pa_x\psi_{e} \pa_x(\rho_{e}
u_{e}-\hat{\rho}\hat{u})dx-\int_{\R_{+}}(\psi_{i}+\psi_{e})\pa_x\hat{f}dx\\
=&\int_{\R_{+}}\hat{\rho}[(\pa_x\psi_{i})^{2}+(\pa_x\psi_{e})^{2}]dx+\int_{\R_{+}}\pa_x\hat{\rho}
(\psi_{i}\pa_x\psi_{i}+\psi_{e}\pa_x\psi_{e})
dx\\&+\int_{\R_{+}}[(\pa_x\psi_{i})^{2}\varphi_{i}+(\pa_x\psi_{e})^{2}\varphi_{e}]
dx+\int_{\R_{+}}(\varphi_{i}\pa_x\hat{u}\pa_x\psi_{i}+\varphi_{e}\pa_x\hat{u}\pa_x\psi_{e})
dx\\&+\int_{\R_{+}}(u_{i}\pa_x\psi_{i}\pa_x\varphi_{i}+u_{e}\pa_x\psi_{e}\pa_x\varphi_{e})
dx
-\int_{\R_{+}}(\psi_{i}+\psi_{e})\pa_x\hat{f}dx\\
\leq&\eta\|\pa_x[\varphi_{i},\varphi_{e}]\|^{2}+C_{\eta}\|\pa_x[\psi_{i},\psi_{e}]\|^{2}
+C(\eps+\tilde{\delta})\|\sqrt{\partial_{x}\hat{u}}[\varphi_{i},\varphi_{e},\psi_{i},\psi_{e}]\|^2+
C\|[\psi_{i},\psi_{e}]\|_{\infty}\|\pa_x\hat{f}\|_{L^{1}}\\
\leq&\eta\|\pa_x[\varphi_{i},\varphi_{e}]\|^{2}+C_{\eta}\|\pa_x[\psi_{i},\psi_{e}]\|^{2}
+C(\eps+\tilde{\delta})\|\sqrt{\partial_{x}\hat{u}}[\varphi_{i},\varphi_{e},\psi_{i},\psi_{e}]\|^2\\&+
C\|[\psi_{i},\psi_{e}]\|^{\frac{1}{2}}\|\partial_{x}[\psi_{i},\psi_{e}]\|^{\frac{1}{2}}
\left[\tilde{\delta}(1+t)^{-1}+\epsilon^{\theta}(1+t)^{-(1-\theta)}+\epsilon^{\frac{1}{q}}(1+t)^{-1+\frac{1}{q}}\right]\\
\leq&\eta\|\pa_x[\varphi_{i},\varphi_{e}]\|^{2}+C_{\eta}\|\pa_x[\psi_{i},\psi_{e}]\|^{2}
+C(\eps+\tilde{\delta})\|\sqrt{\partial_{x}\hat{u}}[\varphi_{i},\varphi_{e},\psi_{i},\psi_{e}]\|^2
+C(\tilde{\delta}^{\frac{4}{3}}+\epsilon^{\frac{2}{15}})(1+t)^{-\frac{7}{6}},
\end{aligned}
\end{equation*}
\begin{equation*}\label{J112}
\begin{aligned}[b]
&J_3+J_{6}+J_{7}\\=&-\frac{1}{2}\left[\frac{|u_{b}|}{\rho_{i}^{2}(0,t)}(\pa_x\varphi_{i})^{2}(0,t)+\frac{|u_{b}|}{\rho_{e}^{2}(0,t)}(\pa_x\varphi_{e})^{2}(0,t)\right]
+\int_{\R_{+}}\left(\frac{1}{2}\pa_x
u_{i}(\pa_x\varphi_{i})^{2}\rho_{i}^{-2}+\frac{1}{2}\pa_x
u_{e}(\pa_x\varphi_{e})^{2}\rho_{e}^{-2}\right)dx\\ \leq
&\int_{\R_{+}}\left(\frac{1}{2}\pa_x
\hat{u}(\pa_x\varphi_{i})^{2}\rho_{i}^{-2}+\frac{1}{2}\pa_x
\hat{u}(\pa_x\varphi_{e})^{2}\rho_{e}^{-2}\right)dx+\int_{\R_{+}}\left(\frac{1}{2}\pa_x
\psi_{i}(\pa_x\varphi_{i})^{2}\rho_{i}^{-2}+\frac{1}{2}\pa_x
\psi_{e}(\pa_x\varphi_{e})^{2}\rho_{e}^{-2}\right)dx\\
\leq&
C(\epsilon+\tilde{\delta})\|\pa_x[\varphi_{i},\varphi_{e}]\|^{2}+C\|\pa_x[\psi_{i},\psi_{e}]\|^{\frac{1}{2}}\|\pa^{2}_x[\psi_{i},\psi_{e}]\|^{\frac{1}{2}}\|\pa_x[\varphi_{i},\varphi_{e}]\|^{2}
\\ \leq &C(\epsilon+\tilde{\delta}+\varepsilon_{1})(\|\pa_x[\varphi_{i},\varphi_{e}]\|^{2})+C\varepsilon_{1}\|\pa^{2}_x[\psi_{i},\psi_{e}]\|^{2},
\end{aligned}
\end{equation*}
\begin{equation*}\label{J1123}
\begin{aligned}[b]
|J_{4}|+|J_{10}|\leq
(\eta+C\tilde{\delta}+C\epsilon)\|\pa_x[\varphi_{i},\varphi_{e}]\|^2+(C_{\eta}+C\tilde{\delta}+C\epsilon)\|\pa_x[\psi_{i},\psi_{e}]\|^2,
\end{aligned}
\end{equation*}
\begin{equation*}\label{J1mjp1345}
\begin{aligned}[b]
|J_{5}|\leq &C\int_{\R_{+}}(|\pa_x^{2}\hat{u}|+|\pa_x
\hat{\rho}|+\pa_x
\hat{u})|[\varphi_{i},\varphi_{e},\psi_{i},\psi_{e}]||\pa_x[\varphi_{i},\varphi_{e}]|dx
+C\|\pa_x[\varphi_{i},\varphi_{e}]\|\|\hat{g}\|\\
\leq&
\eta\|\pa_x[\varphi_{i},\varphi_{e}]\|^{2}+C_{\eta}(\|\hat{g}\|^{2}+\|\pa_x^{2}u^{r_{2}}\|_{\infty}^{2}+\|\pa_xu^{r_{2}}\|_{\infty}^{2})+C\tilde{\delta}\|\pa_x[\varphi_{i},\varphi_{e},\psi_{i},\psi_{e}]\|^{2}
+C\tilde{\delta}^{2}[\varphi_{i}^{2}+\varphi_{e}^{2}](0,t)\\
\leq&(\eta+C\tilde{\delta})\|\pa_x[\varphi_{i},\varphi_{e},\psi_{i},\psi_{e}]\|^{2}
+C\tilde{\delta}^{2}[\varphi_{i}^{2}(0,t)+\varphi_{e}^{2}(0,t)]+C_{\eta}\left[\epsilon^{2\theta}(1+t)^{-2(1-\frac{1}{q})}+\tilde{\delta}(1+t)^{-2}
\right]\\
\leq&(\eta+C\tilde{\delta})\|\pa_x[\varphi_{i},\varphi_{e},\psi_{i},\psi_{e}]\|^{2}
+C\tilde{\delta}^{2}[\varphi_{i}^{2}(0,t)+\varphi_{e}^{2}(0,t)]+C_{\eta}(\tilde{\delta}+\epsilon^{\frac{1}{4}})(1+t)^{-\frac{9}{5}},
\end{aligned}
\end{equation*}
\begin{equation*}\label{J112}
\begin{aligned}[b]
|J_8|\leq &
C\|\pa_x\hat{\rho}\|_{\infty}\|\pa_x[\varphi_{i},\varphi_{e}]\|\|\pa_x[\psi_{i},\psi_{e}]\|
+C\|\pa_x[\psi_{i},\psi_{e}]\|^{\frac{1}{2}}\|\pa^{2}_x[\psi_{i},\psi_{e}]\|^{\frac{1}{2}}\|\pa_x[\varphi_{i},\varphi_{e}]\|^{2}\\
\leq
&C(\epsilon+\tilde{\delta}+\varepsilon_{1})\|\pa_x[\varphi_{i},\varphi_{e},\psi_{i},\psi_{e}]\|^{2}+C\varepsilon_{1}
\|\pa^{2}_x[\psi_{i},\psi_{e}]\|^{2},
\end{aligned}
\end{equation*}
\begin{equation*}\label{J112345}
\begin{aligned}[b]
|J_{9}|+|J_{11}|\leq&C\int_{\R_{+}}(|\partial_{x}^{2}\tilde{u}|+|\partial_{x}\tilde{u}|^{2})|[\varphi_{i},\varphi_{e},\psi_{i},\psi_{e}]||\pa_x[\varphi_{i},\varphi_{e}]|dx
\\&+C\int_{\R_{+}}\left[|\partial_{x}^{2}u^{r_{2}}|+|\partial_{x}u^{r_{2}}|^{2}\right]|[\varphi_{i},\varphi_{e},\psi_{i},\psi_{e}]||\pa_x[\varphi_{i},\varphi_{e}]|dx\\
\leq&
C\tilde{\delta}^{4}[\varphi_{i}^{2}(0,t)+\varphi_{e}^{2}(0,t)]+C\tilde{\delta}\|\pa_x[\varphi_{i},\varphi_{e},\psi_{i},\psi_{e}]\|^2
\\&+C\left[\epsilon^{1+\frac{1}{q}}(1+t)^{-1+\frac{1}{q}}+\epsilon^{2\theta}(1+t)^{-2(1-\theta)}\right]\|[\psi_{i},\psi_{e}]\|\|\pa_x[\varphi_{i},\varphi_{e}]\|\\
\leq&
C(\tilde{\delta}+\epsilon^{\frac{1}{4}})\|\pa_x[\varphi_{i},\varphi_{e},\psi_{i},\psi_{e}]\|^2
+C\epsilon^{\frac{1}{4}}(1+t)^{-\frac{9}{5}},
\end{aligned}
\end{equation*}
and
\begin{equation*}\label{J1p1345}
\begin{aligned}[b]
|J_{12}|\leq &\|\pa_x[\varphi_{i},\varphi_{e}]\|\|\pa_x\hat{f}\|\leq
\eta\|\pa_x[\varphi_{i},\varphi_{e}]\|^{2}+C_{\eta}\|\pa_x\hat{f}\|^{2}\\
\leq&\eta\|\pa_x[\varphi_{i},\varphi_{e}]\|^{2}+C_{\eta}\epsilon^{1+\frac{2}{q}}(1+t)^{-2(1-\frac{1}{q})}+C_{\eta}(\tilde{\delta}+\epsilon)(1+t)^{-2}\\
\leq&\eta\|\pa_x[\varphi_{i},\varphi_{e}]\|^{2}+C_{\eta}(\tilde{\delta}+\epsilon)(1+t)^{-\frac{9}{5}},
\end{aligned}
\end{equation*}
where we take $q=10$ and $\theta=\frac{1}{8}$ in the above
estimates.

 Inserting the above estimations for $J_l$ $(1\leq l\leq 12)$ into
\eqref{sum.d1} and then choosing $\varepsilon_1$, $\epsilon$,
$\tilde{\delta}$ and $\eta$ so small, and integrating \eqref{sum.d1}
over $[0,T]$ and using \eqref{4.28de}, Cauchy-Schwarz's inequality
with $0<\eta<1$, one can see that
\begin{equation}\label{result}
\begin{aligned}[b]
&\left\|\pa_x[\varphi_{i},\varphi_{e}]\right\|^2+\int_{0}^{T}\|\sqrt{\pa_x\hat{u}}\pa_x[\varphi_{i},\varphi_{e}]\|^2
dt +\int_{0}^{T}\|\pa_x[\varphi_{i},\varphi_{e},E]\|^2dt\\
\leq&C\left(\left\|[\psi_{i0},\psi_{e0}]\right\|^2+\|E(x,0)\|^{2}+
\|[\varphi_{i0},\varphi_{e0}]\|_{H^{1}}^{2}\right)+(\eta+C\epsilon^{\frac{1}{10}}+C\tilde{\delta}^{\frac{2}{3}}+\varepsilon_{1})\|\partial_{x}^{2}[\psi_{i},\psi_{e}]\|^{2}
+C(\epsilon^{\frac{1}{10}}+\tilde{\delta}^{\frac{1}{9}}).
\end{aligned}
\end{equation}

{\bf Step 3.} {\it Dissipation of $\pa_x^{2}[\psi_{i},\psi_{e}]$.}

Multiplying $\eqref{3.2ab}_{2}$ and $\eqref{3.2ab}_{4}$ by
$-\frac{\pa^{2}_x\psi_{i}}{\rho_{i}}$ and
$-\frac{\pa^{2}_x\psi_{e}}{\rho_{e}}$ respectively, and then
integrating the resulting equations over $\R_{+}$ and taking the
summation of the resulting equations, one has
\begin{equation}\label{3.1}
\begin{aligned}[b]
&\frac{d}{dt}\int_{\R_{+}}\left(\frac{1}{2}(\pa_x\psi_{i})^{2}+\frac{1}{2}(\pa_x\psi_{e})^{2}\right)dx
+\int_{\R_{+}}\left(\frac{(\pa^{2}_x\psi_{i})^{2}}{\rho_{i}}+\frac{(\pa^{2}_x\psi_{e})^{2}}{\rho_{e}}\right)dx
\\
\qquad=&-\int_{\R_{+}}E\pa^{2}_x(\psi_{i}-\psi_{e})
dx+\int_{\R_{+}}\left(\frac{P'(\rho_{i})}{\rho_{i}}\pa_x\varphi_{i}\pa^{2}_x\psi_{i}+\frac{P'(\rho_{e})}{\rho_{e}}\pa_x\varphi_{e}\pa^{2}_x\psi_{e}\right)
dx\\
&+\int_{\R_{+}}\left(u_{i}\pa_x\psi_{i}\pa^{2}_x\psi_{i}+u_{e}\pa_x\psi_{e}\pa^{2}_x\psi_{e}\right)
dx+\int_{\R_{+}}\left(\frac{g_{i}}{\rho_{i}}\pa^{2}_x\psi_{i}+\frac{g_{e}}{\rho_{e}}\pa^{2}_x\psi_{e}\right)dx\\=&\sum\limits_{l=13}^{16}J_{l},
\end{aligned}
\end{equation}
where we have used the boundary condition
$\psi_{i}(0,t)=\psi_{e}(0,t)=0.$

 We now turn to estimate $J_l$ $(13\leq J\leq
16)$ term by term. By applying  Cauchy-Schwarz's inequality with
$0<\eta<1$, Sobolev inequality \eqref{sob.}, Lemma \ref{cl.Re.Re2.},
Lemma \ref{lem.V},  \eqref{2.4a} and integrating by parts,
 we can obtain  that
\begin{equation*}\label{3.p}
\begin{split}
J_{13}=&E(0,t)[(\pa_x\psi_{i})(0,t)-(\pa_x\psi_{e})(0,t)]+\int_{\R_{+}}\pa_xE\pa_x(\psi_{i}-\psi_{e})dx\\
\leq&
\eta[(\pa_x\psi_{i})^{2}(0,t)+(\pa_x\psi_{e})^{2}(0,t)]+C_{\eta}E^{2}(0,t)
+\frac{1}{2}\|\pa_x[\psi_{i},\psi_{e}]\|^{2}+\frac{1}{2}\|\pa_xE\|^{2}\\
\leq&\eta\|\pa_x[\psi_{i},\psi_{e}]\|_{\infty}^{2}
+C_{\eta}E^{2}(0,t)
+\frac{1}{2}\|\pa_x[\psi_{i},\psi_{e}]\|^{2}+\frac{1}{2}\|\pa_xE\|^{2}\\
\leq&\eta(\|\pa_x[\psi_{i},\psi_{e}]\|^{2}+\|\pa^{2}_x[\psi_{i},\psi_{e}]\|^{2})
+C_{\eta}E^{2}(0,t)
+\frac{1}{2}\|\pa_x[\psi_{i},\psi_{e}]\|^{2}+\frac{1}{2}\|\pa_xE\|^{2},
\end{split}
\end{equation*}
\begin{equation*}\label{3.2asd}
\begin{split}
|J_{14}|+|J_{15}|\leq\eta\|\pa^{2}_x[\psi_{i},\psi_{e}]\|^{2}+C_{\eta}\|\pa_x[\varphi_{i},\varphi_{e},\psi_{i},\psi_{e}]\|^{2}
\end{split}
\end{equation*}
and
\begin{equation*}\label{J1mjp1345}
\begin{aligned}
|J_{16}|\leq
(\eta+C\tilde{\delta})\|\pa_x[\varphi_{i},\varphi_{e},\psi_{i},\psi_{e},
\pa_x\psi_{i},\pa_x\psi_{e}]\|^{2}
+C\tilde{\delta}^{2}[\varphi_{i}^{2}(0,t)+\varphi_{e}^{2}(0,t)]+C_{\eta}(\tilde{\delta}+\epsilon^{\frac{1}{4}})(1+t)^{-\frac{9}{5}},
\end{aligned}
\end{equation*}
where we take $q=10$  in the above estimates  and the estimate of
$J_{16}$ is the same as $J_{5}.$

Inserting  the above estimations for $J_l$ $(13\leq J\leq 16)$ into
\eqref{3.1} and then integrating \eqref{3.1} over $[0,T]$ and using
\eqref{4.28de} and \eqref{result}, one can see that
\begin{equation}\label{3.7}
\begin{aligned}[b]
&\left\|\pa_x[\psi_{i},\psi_{e}]\right\|^2+\int_{0}^{T}\|\pa^{2}_x[\psi_{i},\psi_{e}]\|^2dt\\
\leq&
C\left(\|[\varphi_{i0},\varphi_{e0},\psi_{i0},\psi_{e0}]\|_{H^{1}}^{2}+\left\|[E(x,0)]\right\|^2\right)+C\left(\epsilon^{\frac{1}{10}}+\tilde{\delta}^{\frac{1}{9}}\right).
\end{aligned}
\end{equation}
where we choose $\varepsilon_1$, $\epsilon$, $\tilde{\delta}$ and
$\eta$ sufficiently small.
\begin{proof}[Proof of Proposition \ref{priori.est}]

Now, following Step 1, Step 2 and Step 3, we are ready to prove
Proposition \ref{priori.est}. Summing up the estimates
\eqref{4.28de}, \eqref{result}, $\eqref{3.7}$ and taking $\epsilon$,
$\tilde{\delta}$, $\varepsilon_{1}$, $\eta$  suitably small, we have
\begin{equation}\label{resultuijklo}
\begin{aligned}[b]
&\sup_{0\leq t\leq
T}\left(\left\|[\varphi_{i},\varphi_{e},\psi_{i},\psi_{e}]\right\|_{H^1}^2+\|E\|^{2}\right)
+\int_{0}^{T}\|\sqrt{\pa_x\hat{u}}[\varphi_{i},\varphi_{e},\psi_{i},\psi_{e}]\|^2
dt
\\&+\int_{0}^{T}\|\pa_x[\varphi_{i},\varphi_{e},E]\|^2dt
+\int_{0}^{T}\|\pa_x\left[\psi_{i},\psi_{e}\right]\|_{H^1}^2dt\\
\leq&
C\left(\|[\varphi_{i0},\varphi_{e0},\psi_{i0},\psi_{e0}]\|_{H^{1}}^{2}+\left\|[E(x,0)]\right\|^2+\epsilon^{\frac{1}{10}}+\tilde{\delta}^{\frac{1}{9}}\right).
\end{aligned}
\end{equation}
From $\eqref{3.2ab}_{5}$, it follows
$$
\|\pa_xE\|^2\leq \|[\varphi_i,\varphi_e]\|^2,
$$
this and \eqref{resultuijklo} imply the desired estimate
\eqref{resultuijk}. Thus the proof of Proposition \ref{priori.est}
is completed.
\end{proof}

\section{Global existence and large time behavior}
We are now in a position to complete the proof of Theorem
\ref{main.Res.}.

\begin{proof}[Proof of Theorem \ref{main.Res.}]
By the {\it a priori} estimates \eqref{resultuijk}, there exists a
positive constant $C_{0}$ such that
\begin{equation}\label{rlpo0f}
\begin{aligned}[b]
\left\|[\varphi_{i},\varphi_{e},\psi_{i},\psi_{e},E]\right\|_{H^1}^2\leq
C_{0}\left(\|[\varphi_{i0},\varphi_{e0},\psi_{i0},\psi_{e0}]\|_{H^{1}}^{2}
+\left\|[E(x,0)]\right\|^2+\epsilon^{\frac{1}{10}}+\tilde{\delta}^{\frac{1}{9}}\right)
\end{aligned}
\end{equation}
holds. It is straightforward to see that there exists a small
constant $\varepsilon_{0}$ such that if
$$\|[\varphi_{i0},\varphi_{e0},\psi_{i0},\psi_{e0}]\|_{H^{1}}^{2}+\left\|E(0,x)\right\|^2
\leq\varepsilon_{0}^{2},$$ we can close the {\it a priori}
assumption \eqref{2.5op0ui8} by choosing
$\varepsilon_{1}=4\sqrt{C_{0}(\varepsilon_{0}^{2}+\epsilon^{\frac{1}{10}}+\tilde{\delta}^{\frac{1}{9}})}$.
By letting $\epsilon$ and $\tilde{\delta}$ be small enough, then the
global existence of the solution of \eqref{3.2ab} follows from the
standard continuation argument based on the local existence and the
{\it a priori} estimates in Proposition \ref{priori.est}. Moreover,
\eqref{rlpo0f} and \eqref{retghy} imply \eqref{resultqa21hl}. Our
intention next is to prove the large time behavior as
\eqref{resultqa216y89} and \eqref{resultqa21gt}. For this, we first
justify the following limits:
\begin{equation}\label{latm1}
\lim\limits_{t\rightarrow+\infty}\left\|\pa_x[\varphi_{i},\varphi_{e},\psi_{i},\psi_{e}](t)
\right\|_{L^2}^2= 0,
\end{equation}
and
\begin{equation}\label{latm2}
\lim\limits_{t\rightarrow+\infty}\left\|\pa_xE(t)\right\|^2 = 0.
\end{equation}
To prove \eqref{latm1} and \eqref{latm2}, we get from \eqref{d.rho},
\eqref{d.rhode}, \eqref{3.1} and \eqref{resultuijk} that
\begin{equation}\label{latm3}
\begin{aligned}[b]
&\int_{0}^{+\infty}\left|\frac{d}{dt}\left\|\pa_x[\varphi_{i},\varphi_{e},\psi_{i},\psi_{e}]\right\|^2\right|dt
\\=&2\int_{0}^{+\infty}\left[\left|\int_{\R_{+}}\pa_t\pa_x\varphi_{i}\pa_x\varphi_{i}dx\right|+\left|\int_{\R_{+}}\pa_t\pa_x\varphi_{e}\pa_x\varphi_{e}dx\right|\right]dt
+\int_{0}^{+\infty}\left|\frac{d}{dt}\left\|\pa_x[\psi_{i},\psi_{e}]\right\|^2\right|dt
\\ \leq& C+C\int_{0}^{+\infty}
\left\|\pa_x\left[\varphi_{i},\varphi_{e},\psi_{i},\psi_{e},E,\pa_x\left[\psi_{i},\psi_{e}\right]\right]\right\|^2dt<+\infty.
\end{aligned}
\end{equation}
On the other hand, $\eqref{3.2ab}_{5}$, $\eqref{3.2ab}_{1}$,
$\eqref{3.2ab}_{3}$ and \eqref{resultuijk} yield
\begin{equation}\label{latm4}
\begin{aligned}[b]
&\int_{0}^{+\infty}\left|\frac{d}{dt}\left\|\pa_xE\right\|^2\right|dt
=2\int_{0}^{+\infty}\left|\int_{\R_{+}}\pa_t\pa_xE\pa_xE
dx\right|dt\\=&2\int_{0}^{+\infty}\left|\int_{\R_{+}}\left(\pa_t\varphi_{i}-\pa_t\varphi_{e}\right)\pa_xE
dx\right|dt <+\infty.
\end{aligned}
\end{equation}
Consequently, \eqref{latm3}, \eqref{latm4} together with
\eqref{resultuijk} gives \eqref{latm1} and \eqref{latm2}. Then
\eqref{resultqa216y89} and \eqref{resultqa21gt} follows from
\eqref{latm1}, \eqref{latm2} and Sobolev's inequality \eqref{sob.}.
This ends the proof of Theorem \ref{main.Res.}.
\end{proof}

\section{Appendix}
In this appendix, we will give the following inequalities stated in
 Lemma \ref{lem.V} repeatedly used in the paper.
\begin{lemma}\label{lem.V}
(i) For any function h and $(k+1)j>2,$ there is a positive constant
C such that,
\begin{eqnarray}\label{4.1}
\int_{\mathbb{R}_{+}}|\partial_{x}^{k}(\tilde{u}-u_{*})|^{j}|h|^{2}dx\leq
C\tilde{\delta}^{(k+1)j-2}\left[\tilde{\delta}h^{2}(0,t)+\|\partial_{x}h(t)\|^{2}\right].
\end{eqnarray}

(ii) For any functions f, h and $2(k+1)j>3,$ there is a positive
constant C such that,
\begin{eqnarray}\label{4.2}
\begin{aligned}
\int_{\mathbb{R}_{+}}|\partial_{x}^{k}(\tilde{u}-u_{*})|^{j}|h\partial_{x}f|dx
\leq
\tilde{\delta}\|\partial_{x}f(t)\|^{2}+C\tilde{\delta}^{2(k+1)j-3}\left[\tilde{\delta}h^{2}(0,t)+\|\partial_{x}h(t)\|^{2}\right].
\end{aligned}
\end{eqnarray}

(iii) For any $\theta\in [0,1]$, we have
\begin{eqnarray}\label{4.2a}
\|\partial_{x}(n^{r_{2}}-n_{*}),\partial_{x}(u^{r_{2}}-u_{*})
 \|_{\infty}\leq
C\epsilon^{\theta}(1+t)^{-(1-\theta)}.
\end{eqnarray}

(iv) For any $\theta\in [0,1]$, $q\geq 10$, we have
\begin{eqnarray}\label{4.2ab}
\begin{aligned}
\int_{\mathbb{R}_{+}}\left(|\hat{f}|+|\hat{g}+\partial_{x}^{2}u^{r_{2}}|\right)dx
\leq
C\frac{\tilde{\delta}}{1+\tilde{\delta}t}+C\epsilon^{\theta}(1+t)^{-(1-\theta)}\ln(1+\tilde{\delta}t)
\end{aligned}
\end{eqnarray}
and \begin{eqnarray}\label{4.2abc}
\begin{aligned}
\int_{\mathbb{R}_{+}}|\partial_{x}\hat{f}|dx \leq
C\tilde{\delta}(1+t)^{-1}+C\epsilon^{\theta}(1+t)^{-(1-\theta)}+C\epsilon^{\frac{1}{q}}(1+t)^{-1+\frac{1}{q}}.
\end{aligned}
\end{eqnarray}

(v) For $q\geq 10$, we have
\begin{eqnarray}\label{4.2abcdv}
\begin{aligned}
\int_{\mathbb{R}_{+}}|\hat{g}|^{2}dx \leq
C\epsilon^{1+\frac{2}{q}}(1+t)^{-2(1-\frac{1}{q})}+C\tilde{\delta}(1+t)^{-2}
\end{aligned}
\end{eqnarray}
and \begin{eqnarray}\label{4.2abcf}
\begin{aligned}
\int_{\mathbb{R}_{+}}|\partial_{x}\hat{f}|^{2}dx \leq
C\epsilon^{1+\frac{2}{q}}(1+t)^{-2(1-\frac{1}{q})}+C(\tilde{\delta}+\epsilon)(1+t)^{-2}.
\end{aligned}
\end{eqnarray}
\end{lemma}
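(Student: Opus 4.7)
The plan is to prove (i) by an integration-by-parts argument against the weight coming from Lemma \ref{lem.Vw}, then derive (ii) from (i) via Cauchy--Schwarz, read off (iii) from interpolation of the two min-bounds in Lemma \ref{cl.Re.Re2.}(ii), and finally prove (iv)--(v) by inserting these estimates into the explicit formulas \eqref{2.13}--\eqref{2.14} and splitting the spatial integration according to where the boundary layer and the rarefaction wave are respectively concentrated.

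For (i), I would use the bound $|\pa_x^k(\tilde u-u_*)|\le C\tilde\delta^{k+1}(1+\tilde\delta x)^{-(k+1)}$ from Lemma \ref{lem.Vw} to reduce the estimate to controlling $\int_{\R_+}(1+\tilde\delta x)^{-(k+1)j} h^2\,dx$. Setting $m=(k+1)j>2$ and $V(x)=-[\tilde\delta(m-1)]^{-1}(1+\tilde\delta x)^{-m+1}$, one has $\pa_x V=(1+\tilde\delta x)^{-m}$, and integration by parts gives
\begin{equation*}
\int_{\R_+}\frac{h^2}{(1+\tilde\delta x)^{m}}\,dx=\frac{h^2(0,t)}{\tilde\delta(m-1)}+\frac{2}{\tilde\delta(m-1)}\int_{\R_+}\frac{h\,\pa_x h}{(1+\tilde\delta x)^{m-1}}\,dx.
\end{equation*}
Absorbing the cross-term by Cauchy's inequality (using $(1+\tilde\delta x)^{-(m-2)}\le 1$ since $m>2$) yields a bound by $C\tilde\delta^{-1}h^2(0,t)+C\tilde\delta^{-2}\|\pa_x h\|^2$, and multiplying by the $\tilde\delta^{(k+1)j}$ extracted from the pointwise bound on $|\pa_x^k(\tilde u-u_*)|^j$ gives exactly \eqref{4.1}. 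For (ii), Cauchy--Schwarz and the weighted Young inequality produce
\[
\int_{\R_+}|\pa_x^k(\tilde u-u_*)|^j|h\pa_xf|\,dx\le \tilde\delta\|\pa_xf\|^2+\frac{1}{4\tilde\delta}\int_{\R_+}|\pa_x^k(\tilde u-u_*)|^{2j}h^2\,dx,
\]
and applying (i) with $j$ replaced by $2j$ (which requires $(k+1)\cdot 2j>2$, implied by $2(k+1)j>3$) gives \eqref{4.2}.

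Claim (iii) is pure interpolation: Lemma \ref{cl.Re.Re2.}(ii) at $p=\infty$ gives
$\|\pa_x[\rho^{r_2},u^{r_2}]\|_\infty\le C\min\{\delta_r\eps,(1+t)^{-1}\}$, and using $\min\{a,b\}\le a^\theta b^{1-\theta}$ with $a=\delta_r\eps$, $b=(1+t)^{-1}$ yields the required $C\eps^\theta(1+t)^{-(1-\theta)}$ bound.

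The main obstacle is (iv)--(v), where I must extract the announced $\ln(1+\tilde\delta t)$ and the sharp powers of $\eps$ and $\tilde\delta$. Starting from \eqref{2.14} and integrating by parts one checks the identity
\[
\int_{\R_+}\pa_x\tilde u\,(u^{r_2}-u_*)\,dx+\int_{\R_+}\pa_xu^{r_2}\,(u_*-\tilde u)\,dx=2\int_{\R_+}(u_*-\tilde u)\pa_xu^{r_2}\,dx,
\]
since the boundary contributions vanish thanks to $u^{r_2}(0,t)=u_*$ and $\tilde u(+\infty)=u_*$. I would then split this integral at $x\sim t$: on $\{x\le t\}$ use $|u_*-\tilde u|\le C\tilde\delta(1+\tilde\delta x)^{-1}$ together with $\|\pa_xu^{r_2}\|_\infty\le C\eps^\theta(1+t)^{-(1-\theta)}$, producing the $\eps^\theta(1+t)^{-(1-\theta)}\ln(1+\tilde\delta t)$ term (the logarithm comes from $\int_0^t (1+\tilde\delta x)^{-1}dx$), while on $\{x\ge t\}$ use $|u_*-\tilde u|\le C/(1+\tilde\delta t)$ together with $\|\pa_xu^{r_2}\|_{L^1}\le C\delta_r$ to produce the $\tilde\delta/(1+\tilde\delta t)$ term. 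For the derivative bound \eqref{4.2abc} I would differentiate $\hat f$ in \eqref{2.13} and use Lemma \ref{lem.Vw} and Lemma \ref{cl.Re.Re2.}(ii) with $p=1$ and $p=\infty$ to handle each resulting product; the worst term is $\pa_x\tilde u\,\pa_xu^{r_2}$, which after Lemma \ref{cl.Re.Re2.}(ii) at $p=q$ gives the $\eps^{1/q}(1+t)^{-1+1/q}$ contribution. For (v), I would square \eqref{2.14}, estimate $\|\pa_x^2 u^{r_2}\|^2$ and $\|\pa_xu^{r_2}\|^4$ by the $p=2$ and $p=4$ cases of Lemma \ref{cl.Re.Re2.}(ii) (the first yields $\eps^{1+2/q}(1+t)^{-2(1-1/q)}$), and bound the interaction terms by Lemma \ref{lem.Vw} integrated against $|u^{r_2}-u_*|^2$ (which after a change of variable gives the $\tilde\delta(1+t)^{-2}$ tail). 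The same scheme handles $\|\pa_x\hat f\|^2$.
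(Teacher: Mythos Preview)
Your proposal is correct and, for parts (ii)--(v), follows essentially the same route as the paper: Young plus (i) for (ii), interpolation of the $\min$-bound for (iii), integration by parts followed by the split $\int_0^t+\int_t^\infty$ for the first estimate in (iv), and term-by-term use of Lemma~\ref{lem.Vw} and Lemma~\ref{cl.Re.Re2.}(ii) for the rest.

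The one genuine methodological difference is in part (i). The paper does not integrate by parts against an antiderivative of the weight; instead it invokes the pointwise Poincar\'e-type bound
\[
|h(x,t)|\le |h(0,t)|+x^{1/2}\|\pa_xh(t)\|
\]
and then simply integrates the explicit weights $\tilde\delta^{(k+1)j}(1+\tilde\delta x)^{-(k+1)j}$ and $x\,\tilde\delta^{(k+1)j}(1+\tilde\delta x)^{-(k+1)j}$ over $\R_+$. Your Hardy-type argument (choose $V$ with $V'=(1+\tilde\delta x)^{-m}$, integrate by parts, absorb the cross term) is equally valid and yields the same constants; it has the mild advantage of not requiring the pointwise Poincar\'e inequality, while the paper's approach is a touch shorter because no absorption step is needed.

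One minor inaccuracy in your sketch of \eqref{4.2abc}: the $\eps^{1/q}(1+t)^{-1+1/q}$ contribution actually arises from $\|\pa_x^2 u^{r_2}\|_{L^1}$ via Lemma~\ref{cl.Re.Re2.}(ii) at $p=1$, not from the product $\pa_x\tilde u\,\pa_xu^{r_2}$ (which in the paper is bounded by $\|\pa_xu^{r_2}\|_\infty\|\pa_x\tilde u\|_{L^1}$ and feeds into the $\eps^\theta(1+t)^{-(1-\theta)}$ term). This does not affect the validity of your overall scheme.
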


\begin{proof}
$(i)$ Using \eqref{2.4a} and the following Poincar\'{e} type
inequalities
\begin{eqnarray}\label{4.2ab1q}
\begin{aligned}
|h(x,t)|\leq |h(0,t)|+x^{\frac{1}{2}}\|\pa_xh(t)\|,
\end{aligned}
\end{eqnarray} for $(k+1)j>2,$ we
have
\begin{eqnarray*}\label{4.1}
\begin{split}
&\int_{\mathbb{R}_{+}}|\partial_{x}^{k}(\tilde{u}-u_{*})|^{j}|h|^{2}dx\\
\leq&\int_{\mathbb{R}_{+}}|\partial_{x}^{k}(\tilde{u}-u_{*})|^{j}\left(h^{2}(0,t)+x\|\partial_{x}h(t)\|^{2}\right)dx\\
\leq& Ch^{2}(0,t)\int_{\mathbb{R}_{+}}\frac{\tilde{\delta}^{(k+1)j}}
{(1+\tilde{\delta}x)^{(k+1)j}}dx+C\|\partial_{x}h(t)\|^{2}\int_{\mathbb{R}_{+}}\frac{x\tilde{\delta}^{(k+1)j}}
{(1+\tilde{\delta}x)^{(k+1)j}}dx \\
\leq&
C\tilde{\delta}^{(k+1)j-2}\left[\tilde{\delta}h^{2}(0,t)+\|\partial_{x}h(t)\|^{2}\right].
\end{split}
\end{eqnarray*}

$(ii)$ By the Young inequality and Lemma \ref{lem.V} $(i)$, for
$2(k+1)j>3,$ we have
\begin{eqnarray*}\label{4.2}
\begin{aligned}
&\int_{\mathbb{R}_{+}}|\partial_{x}^{k}(\tilde{u}-u_{*})|^{j}|h\partial_{x}f|dx\\
 \leq&
\tilde{\delta}\|\partial_{x}f(t)\|^{2}+C\int_{\mathbb{R}_{+}}\frac{\tilde{\delta}^{2(k+1)j-1}}
{(1+\tilde{\delta}x)^{2(k+1)j}}h^{2}dx\\
\leq&
\tilde{\delta}\|\partial_{x}f(t)\|^{2}+C\tilde{\delta}^{2(k+1)j-3}\left[\tilde{\delta}h^{2}(0,t)+\|\partial_{x}h(t)\|^{2}\right].
\end{aligned}
\end{eqnarray*}

$(iii)$ From Lemma \ref{cl.Re.Re2.} $(ii)$, we have
\begin{eqnarray*}\label{4.2a}
\begin{aligned}
\|\partial_{x}(n^{r_{2}}-n_{*}),\partial_{x}(u^{r_{2}}-u_{*})
 \|_{\infty}
\leq C\min\{\epsilon,(1+t)^{-1}\}.
\end{aligned}
\end{eqnarray*}
Thus we have
\begin{eqnarray*}\label{4.2a}
\begin{aligned}
\|\partial_{x}(n^{r_{2}}-n_{*}),\partial_{x}(u^{r_{2}}-u_{*})\|_{\infty}
\leq C\epsilon^{\theta}(1+t)^{-(1-\theta)}.
\end{aligned}
\end{eqnarray*}
Here we have used the fact that if $0<C\leq A$ and  $0<C\leq B$,
then $C\leq A^{\theta}B^{1-\theta}$ for any $0\leq \theta\leq 1.$

$(iv)$ Using \eqref{2.14}, \eqref{2.4a}, Lemma \ref{cl.Re.Re2.}
$(ii)$ and Lemma \ref{lem.V} $(iii)$, we have
\begin{eqnarray*}\label{4.2ab}
\begin{aligned}
&\int_{\mathbb{R}_{+}}\left(|\hat{f}|+|\hat{g}+\partial_{x}^{2}u^{r_{2}}|\right)dx\\
\leq &
C\int_{\mathbb{R}_{+}}\{\partial_{x}\tilde{u}(u^{r_{2}}-u_{*})+\partial_{x}u^{r_{2}}(u_{*}-\tilde{u})\}dx\\
=&C\int_{\mathbb{R}_{+}}\partial_{x}[(u^{r_{2}}-u_{*})(\tilde{u}-u_{*})]dx+2C\int_{\mathbb{R}_{+}}\partial_{x}u^{r_{2}}(u_{*}-\tilde{u})dx\\
=&2C\int_{0}^{t}\partial_{x}u^{r_{2}}(u_{*}-\tilde{u})dx+2C\int_{t}^{+\infty}\partial_{x}u^{r_{2}}(u_{*}-\tilde{u})dx\\
\leq&
C\|\partial_{x}u^{r_{2}}\|_{\infty}\int_{0}^{t}\frac{\tilde{\delta}}{1+\tilde{\delta}x}dx+C\frac{\tilde{\delta}}{1+\tilde{\delta}t}
\int_{t}^{+\infty}\partial_{x}u^{r_{2}}dx\\
\leq&
C\|\partial_{x}u^{r_{2}}\|_{\infty}\ln(1+\tilde{\delta}t)+C\frac{\tilde{\delta}}{1+\tilde{\delta}t}
\|\partial_{x}u^{r_{2}}\|_{L^{1}}\\ \leq&
C\epsilon^{\theta}(1+t)^{-(1-\theta)}\ln(1+\tilde{\delta}t)+C\frac{\tilde{\delta}}{1+\tilde{\delta}t}.
\end{aligned}
\end{eqnarray*}
where we have used $u^{r_{2}}(0,t)=u_{*}$ and $\tilde{u}\rightarrow
u_{*}$ as $x\rightarrow +\infty.$

 Similarly,  we can
obtain that
\begin{eqnarray*}\label{4.2abc}
\begin{aligned}
\int_{\mathbb{R}_{+}}|\partial_{x}\hat{f}|dx\leq&
C\int_{\mathbb{R}_{+}}
\left\{(|\partial_{x}^{2}\tilde{u}|+(\partial_{x}\tilde{u})^{2})(u^{r_{2}}-u_{*})+\partial_{x}\tilde{u}\partial_{x}u^{r_{2}}+|\partial_{x}^{2}u^{r_{2}}|+(\partial_{x}u^{r_{2}})^{2}\right\}dx\\
\leq&C\|\partial_{x}u^{r_{2}}\|_{\infty}\int_{\mathbb{R}_{+}}x(|\partial_{x}^{2}\tilde{u}|+(\partial_{x}\tilde{u})^{2})dx
+C\|\partial_{x}u^{r_{2}}\|_{\infty}\|\partial_{x}\tilde{u}\|_{L^{1}}+C\|\partial_{x}^{2}u^{r_{2}}\|_{L^{1}}+C\|\partial_{x}u^{r_{2}}\|^{2}\\
\leq&
C\tilde{\delta}(1+t)^{-1}+C\epsilon^{\theta}(1+t)^{-(1-\theta)}+C\epsilon^{\frac{1}{q}}(1+t)^{-1+\frac{1}{q}},
\end{aligned}
\end{eqnarray*}
where we have used the fact that $u^{r_{2}}(0,t)=u_{*}$ which yields
$u^{r_{2}}(x,t)-u_{*}\leq x\|\partial_{x}u^{r_{2}}\|_{\infty}.$

$(v)$ Noticing \eqref{2.14} and the fact that
$u^{r_{2}}(x,t)-u_{*}\leq x\|\partial_{x}u^{r_{2}}\|_{\infty}$, and
applying Lemma \ref{cl.Re.Re2.} and \eqref{2.4a}, we obtain that
\begin{eqnarray*}\label{4.2abcdv}
\begin{aligned}
\int_{\mathbb{R}_{+}}|\hat{g}|^{2}dx \leq&
C\int_{\mathbb{R}_{+}}\{|\partial_{x}^{2}u^{r_{2}}|^{2}+|\partial_{x}\tilde{u}|^{2}|(u^{r_{2}}-u_{*})|^{2}+|\partial_{x}u^{r_{2}}|^{2}|(u_{*}-\tilde{u})|^{2}\}dx\\
\leq&
C\|\partial_{x}^{2}u^{r_{2}}\|^{2}+C\|\partial_{x}u^{r_{2}}\|_{\infty}^{2}\int_{\mathbb{R}_{+}}|\partial_{x}\tilde{u}|^{2}x^{2}dx
+C\|\partial_{x}u^{r_{2}}\|_{\infty}^{2}\int_{\mathbb{R}_{+}}(u_{*}-\tilde{u})|^{2}dx\\
\leq&
C\epsilon^{1+\frac{2}{q}}(1+t)^{-2(1-\frac{1}{q})}+C\tilde{\delta}(1+t)^{-2}
\end{aligned}
\end{eqnarray*}
and \begin{eqnarray*}\label{4.2abcf}
\begin{aligned}
\int_{\mathbb{R}_{+}}|\hat{f}_{x}|^{2}dx\leq& C\int_{\mathbb{R}_{+}}
\left\{(|\partial_{x}^{2}\tilde{u}|^{2}+(\partial_{x}\tilde{u})^{4})|(u^{r_{2}}-u_{*})|^{2}+|\partial_{x}\tilde{u}|^{2}(\partial_{x}u^{r_{2}})^{2}
+|\partial_{x}^{2}u^{r_{2}}|^{2}+(\partial_{x}u^{r_{2}})^{4}\right\}dx\\
\leq&
C\|\partial_{x}^{2}u^{r_{2}}\|^{2}+C\|\partial_{x}u^{r_{2}}\|_{\infty}^{3}\|\partial_{x}u^{r_{2}}\|_{L^{1}}+C\|\partial_{x}u^{r_{2}}\|_{\infty}^{2}\int_{\mathbb{R}_{+}}
\left[(|\partial_{x}^{2}\tilde{u}|^{2}+(\partial_{x}\tilde{u})^{4})x^{2}+|\partial_{x}\tilde{u}|^{2}\right]dx
\\
\leq&
C\epsilon^{1+\frac{2}{q}}(1+t)^{-2(1-\frac{1}{q})}+C(\tilde{\delta}+\epsilon)(1+t)^{-2}.
\end{aligned}
\end{eqnarray*}
\end{proof}

\medskip
\noindent {\bf Acknowledgements:} The research was supported by the
National Natural Science Foundation of China \#11331005, the Program
for Changjiang Scholars and Innovative Research Team in University
\#IRT13066, and the Scientific Research Funds of Huaqiao University
(Grant No.15BS201). The first author would like to thank Professor
Renjun Duan for many fruitful discussions on the topic of the paper.

\bigbreak

\end{document}